\newtheorem{thm}{\bf Theorem}[section]
\newtheorem{prop}[thm]{\bf Proposition}
\newtheorem{lem}[thm]{\bf Lemma}
\newtheorem{cor}[thm]{\bf Corollary}
\newtheorem{q}[thm]{\bf Question}
\newtheorem*{thm*}{\bf Theorem}
\newtheorem*{cor*}{\bf Corollary}
\theoremstyle{definition}
\newtheorem{rem}[thm]{\it Remark}
\newtheorem*{df*}{\bf Definition}
\newtheorem*{not*}{\bf Notation}
\newtheorem*{dfs*}{\bf Definitions}
\newtheorem*{ack*}{\bf Acknowledgements}
\newtheorem*{dfrem*}{\bf Definition and Remark}
\def\P{\mathbb{P}}
\def\C{\mathbb{C}}
\def\Q{\mathbb{Q}}
\def\Z{\mathbb{Z}}
\DeclareMathOperator{\Proj}{Proj}
\DeclareMathOperator{\Ker}{Ker}
\DeclareMathOperator{\Image}{Im}
\DeclareMathOperator{\corank}{co-rank}
\DeclareMathOperator{\Coker}{Coker}
\DeclareMathOperator{\tr}{tr}
\DeclareMathOperator{\tors}{tors}
\DeclareMathOperator{\alg}{alg}
\subjclass[2010]{14C25, 14C30, 14C35}
\keywords{Regular homomorphisms, Abel-Jaobi map, Chow groups}
\title[\tiny A remark on a $3$-fold constructed by Colliot-Th\'el\`ene and Voisin]
{A remark on a $3$-fold constructed by Colliot-Th\'el\`ene and Voisin}
\author{Fumiaki Suzuki}
\address{Department of Mathematics, Statistics, and Computer Science, University of Illinois at Chicago}
\email{fsuzuk2@uic.edu}
\begin{document}
\maketitle

\begin{abstract}
A classical question asks whether the Abel-Jacobi map is universal among all regular homomorphisms.
In this paper, we prove that we can construct a $4$-fold which gives the negative answer in codimension $3$ 
if the generalized Bloch conjecture holds for a $3$-fold constructed by Colliot-Th\'el\`ene and Voisin
 in the context of the study of the defect of the integral Hodge conjecture in degree $4$.
\end{abstract}

\section{introduction}
For a smooth complex projective variety $X$,
we denote by $CH^{p}(X)$ the $p$-th Chow group of $X$ and by $A^{p}(X)$ 
(resp. $CH^{p}(X)_{\hom}$) 
the subgroup of cycle classes algebraically equivalent to zero
(resp. homologous to zero).
A homomorphism $\phi\colon A^{p}(X)\rightarrow A$ to an Abelian variety $A$ is called {\it regular} 
if for any smooth connected projective variety $S$ with a base point $s_{0}$ and for any codimension $p$ cycle $\Gamma$ on $S\times X$,
the composition 
\[S\rightarrow A^{p}(X) \rightarrow A, s\mapsto \phi(\Gamma_{*}(s-s_{0}))\]
is a morphism of algebraic varieties.
An important example of such homomorphisms is given as follows.
We consider the Abel-Jacobi map
$CH^{p}(X)_{\hom}\rightarrow J^{p}(X)$,
where $J^{p}(X)= H^{2p-1}(X, \C)/(H^{2p-1}(X,\Z(p)) + F^{p}H^{2p-1}(X, \C))$ is the $p$-th Griffith intermediate Jacobian.
Then the image $J^{p}_{a}(X)\subset J^{p}(X)$ of the restriction of the Abel-Jacobi map to $A^{p}(X)$ is an Abelian variety, and the induced map
\[
\psi^{p}\colon A^{p}(X)\rightarrow J^{p}_{a}(X),
\]
which we also call Abel-Jacobi,
is regular \cite{G}\cite{L}.
A classical question \cite[Section 7]{M3} asks whether the Abel-Jacobi map $\psi^{p} \colon A^{p}(X)\rightarrow J^{p}_{a}(X)$ is {\it universal} among all regular homomorphisms $\phi\colon A^{p}(X)\rightarrow A$,
that is, whether every such $\phi$ factors through $\psi^{p}$.
It is true for $p=1$ by the theory of Picard variety, and for $p=\dim X$ by the theory of Albanese variety.
It is also true for $p=2$, which is proved by Murre \cite{M1}\cite{M2} using the Merkurjev-Suslin theorem \cite{MS}.
To the author's knowledge, this question is open for $3\leq p\leq \dim X-1$.

Some progress has been made. 
As an application of the theory of the Lawson homology and the morphic cohomology, the following theorem is proved by Walker \cite{W}:
the Abel-Jacobi map $\psi^{p}$ factors as
\[
\xymatrix{
 &J(N^{p-1}H^{2p-1}(X, \Z(p)))\ar[d]^-{\pi^{p}}\\
 A^{p}(X)\ar[ur]^-{\widetilde{\psi}^{p}}\ar[r]_-{\psi^{p}}& J^{p}_{a}(X)\\
},
\]
where $J(N^{p-1}H^{2p-1}(X, \Z(p)))$ is the intermediate Jacobian for the mixed Hodge structure given by the coniveau filtration $N^{p-1}H^{2p-1}(X, \Z(p))$,
 $\pi^{p}$ is the natural map,
and $\widetilde{\psi}^{p}$ is a surjective regular homomorphism.
If the Abel-Jacobi map $\psi^{p}$ is universal, then the kernel
\[\Ker(\pi^{p})= \Coker\left(H^{2p-1}(X,\Z(p))_{\tors}\rightarrow (H^{2p-1}(X,\Z(p))/N^{p-1}H^{2p-1}(X,\Z(p)))_{\tors}\right)\]
is trivial.
In other words, the sublattice
\[
N^{p-1}H^{2p-1}(X,\Z(p))/\tors\subset H^{2p-1}(X,\Z(p))/\tors
\] 
is primitive.

We prove that the converse also holds for any $p\in \left\{3, \dim X-1\right\}$ if the Chow group $CH_{0}(X)$ of $0$-cycles is supported on a $3$-dimensional closed subset.
It is a consequence of an analogue of the Roitman theorem \cite[Theorem 3.1]{R}:

\begin{thm}\label{tA}
Let $Y$ be a smooth projective variety such that $CH_{0}(Y)$ is supported on a $3$-dimensional closed subset.
Let $p\in \left\{3, \dim Y-1\right\}$.
Then the restriction 
\[\widetilde{\psi}^{p}|_{\tors} \colon A^{p}(Y)_{\tors} \rightarrow J(N^{p-1}H^{2p-1}(Y, \Z(p)))_{\tors}\]
is an isomorphism.
Moreover the Walker map $\widetilde{\psi}^{p}$ is universal.
\end{thm}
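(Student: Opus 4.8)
The plan is to reduce the two assertions to Roitman's theorem for $0$-cycles \cite{R} and to Murre's theorem in codimension $2$ \cite{M1}\cite{M2}\cite{MS} by means of a decomposition of the diagonal. Since $CH_{0}(Y)$ is supported on a $3$-dimensional closed subset, the Bloch--Srinivas argument produces a positive integer $N$ and a decomposition
\[
N\Delta_{Y}=\Gamma_{1}+\Gamma_{2}\in CH^{n}(Y\times Y),\qquad n=\dim Y,
\]
where $\Gamma_{1}$ is supported on $Y\times W$ with $\dim W=3$ and $\Gamma_{2}$ is supported on $D\times Y$ with $D\subsetneq Y$ a divisor. After resolving singularities I regard $\Gamma_{1}$ as a correspondence factoring through a smooth projective threefold $\widetilde{W}$ and $\Gamma_{2}$ as one factoring through restriction to a smooth projective divisor $\widetilde{D}$. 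Because regular homomorphisms, the coniveau filtration, and the associated intermediate Jacobians are all functorial for correspondences, the two projectors act compatibly on the entire diagram defining $\widetilde{\psi}^{p}$; in particular $N\cdot\mathrm{id}=\Gamma_{1*}+\Gamma_{2*}$ holds simultaneously on $A^{p}(Y)_{\tors}$ and on $J(N^{p-1}H^{2p-1}(Y,\Z(p)))_{\tors}$, intertwined by $\widetilde{\psi}^{p}$.

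Computing the action on Chow groups, $\Gamma_{1*}$ carries $A^{p}(Y)$ to $A^{p+3-n}(\widetilde{W})$ (codimension $p+3-n$ on a threefold), while $\Gamma_{2*}$ factors through the restriction $A^{p}(Y)\to A^{p}(\widetilde{D})$ (codimension $p$ on the divisor). This is exactly where the two values $p\in\{3,\dim Y-1\}$ enter. For $p=\dim Y-1$ the first contribution lands in $A^{2}(\widetilde{W})$, to which Murre's theorem applies, and the second lands in $A^{\dim\widetilde{D}}(\widetilde{D})$, i.e.\ in algebraically trivial $0$-cycles on $\widetilde{D}$, where the Walker map is the Albanese map and Roitman's theorem applies; no induction is needed. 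For $p=3$ the first contribution sits in codimension $\le 2$ on $\widetilde{W}$ (again Murre, or trivially when $n>6$), while the second sits in codimension $3$ on $\widetilde{D}$; here I exploit that the hypothesis forces the transcendental cohomology of $Y$, and hence of $\widetilde{D}$ in the relevant degree, to have high coniveau, so that Bloch's $\ell$-adic Abel--Jacobi map detects the torsion through codimension $2$ data and the Merkurjev--Suslin input again applies, the base of the descending induction on $\dim Y$ being $\dim Y=4$ where $p=3=\dim Y-1$.

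With these reductions I prove the two halves of the torsion statement. Surjectivity of $\widetilde{\psi}^{p}|_{\tors}$ follows by transporting the known surjectivity in the base cases through the decomposition, using the divisibility of $A^{p}(Y)$ together with the surjectivity of the Walker map. Injectivity---the genuine Roitman-type content---is obtained by pushing a torsion class $z$ with $\widetilde{\psi}^{p}(z)=0$ through $\Gamma_{1}$ and $\Gamma_{2}$, applying the base cases to conclude $\Gamma_{1*}z=\Gamma_{2*}z=0$ and hence $Nz=0$, and then removing the factor $N$ by running the argument $\ell$-adically, prime by prime, via the Bloch $\ell$-adic Abel--Jacobi map. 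It is essential here that the target is Walker's coniveau Jacobian rather than $J^{p}_{a}(Y)$: the discrepancy between the two is precisely the torsion group $\Ker(\pi^{p})=\Coker\big(H^{2p-1}(Y,\Z(p))_{\tors}\to(H^{2p-1}(Y,\Z(p))/N^{p-1}H^{2p-1}(Y,\Z(p)))_{\tors}\big)$ recorded in the introduction, and it is exactly this extra torsion, absorbed by the refinement, that would otherwise obstruct injectivity.

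Finally I deduce universality from the torsion isomorphism. Writing $\psi^{p}_{\mathrm{univ}}\colon A^{p}(Y)\to Ab^{p}(Y)$ for the universal regular homomorphism---which exists in these cases because the decomposition expresses $A^{p}(Y)$ through codimension $\le 2$ cycles and $0$-cycles---the Walker map factors as $\widetilde{\psi}^{p}=h\circ\psi^{p}_{\mathrm{univ}}$ for a surjective homomorphism $h$ of Abelian varieties. Since $\psi^{p}_{\mathrm{univ}}|_{\tors}$ is surjective onto $Ab^{p}(Y)_{\tors}$ and $\widetilde{\psi}^{p}|_{\tors}=h|_{\tors}\circ\psi^{p}_{\mathrm{univ}}|_{\tors}$ is injective by the first part, the finite kernel of $h$ meets no torsion and is therefore trivial, so $h$ is an isomorphism and $\widetilde{\psi}^{p}$ is universal. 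I expect the main obstacle to be the injectivity on torsion at the primes dividing $N$, where the diagonal decomposition no longer controls the cycle on the nose; this is the point at which the $\ell$-adic Abel--Jacobi map must be combined with the coniveau refinement, and it is also the subtle ingredient in the codimension-$3$ contribution of the divisor $\widetilde{D}$ in the case $p=3$.
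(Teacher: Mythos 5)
Your strategy---a Bloch--Srinivas decomposition $N\Delta_{Y}=\Gamma_{1}+\Gamma_{2}$ acting on the whole diagram, plus a reduction to Roitman and Murre---is not how the paper argues, and it has a genuine gap exactly where you anticipate one: at the primes dividing $N$. From $\Gamma_{1*}z=\Gamma_{2*}z=0$ you can only conclude $Nz=0$, which is vacuous for a torsion class $z$ killed by a power of a prime $l$ dividing $N$; and no ``prime by prime'' $l$-adic rerun can repair this, because the factor $N$ is built into the decomposition itself, so every identity you extract from it holds only after multiplication by $N$. The paper circumvents this by never pushing torsion cycles through correspondences at all. It uses the $CH_{0}$-hypothesis only through \cite[Proposition 3.3]{CTV} (where the diagonal decomposition is hidden) to show that the $E_{2}$-terms $H^{0}(Y,\mathcal{H}^{4}(\Z(3)))$, resp.\ $H^{d-4}(Y,\mathcal{H}^{d}(\Z(d-1)))$, of the Bloch--Ogus coniveau spectral sequence are \emph{torsion}; hence $\Ker(f^{p})$, the image of the $d_{2}$-differential hitting $E^{p-1,p}_{2}$, is a torsion group, where $f^{p}$ is the edge homomorphism onto $N^{p-1}H^{2p-1}(Y,\Z(p))$. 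Then Lemma \ref{t1} --- an unconditional exact sequence built from the Bloch map, Ma's theorem \cite{Ma}, and divisibility of $A^{p}(Y)_{l\mathchar`-\tors}$ ---
\[
0\rightarrow H^{p-1}(Y,\mathcal{K}_{p})\otimes\Q_{l}/\Z_{l}\rightarrow\Ker\left(f^{p}\otimes\Q_{l}/\Z_{l}\right)\rightarrow\Ker\left(\widetilde{\psi}^{p}|_{l\mathchar`-\tors}\right)\rightarrow 0
\]
kills the kernel for \emph{every} prime $l$: a torsion group tensored with the divisible group $\Q_{l}/\Z_{l}$ vanishes no matter which primes divide its exponent, so $\Ker(f^{p}\otimes\Q_{l}/\Z_{l})=0$ by right-exactness of the tensor product. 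This tensoring step is precisely what ``removes $N$'', and your cycle-chasing framework has no substitute for it.

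A second gap is specific to $p=3$ (when $\dim Y>4$): your $\Gamma_{2}$-contribution lies in $A^{3}(\widetilde{D})$, again in codimension $3$ on the resolved divisor, and your descending induction needs the hypothesis --- or at least ``high coniveau'' --- to hold for $\widetilde{D}$. It does not: supportedness of $CH_{0}(Y)$ in dimension $3$ implies nothing about $CH_{0}(\widetilde{D})$ or about the coniveau of $H^{*}(\widetilde{D})$, since restriction $H^{*}(Y)\rightarrow H^{*}(\widetilde{D})$ is far from surjective and a divisor in a variety with small $CH_{0}$ can itself have huge $CH_{0}$ (a quintic surface inside $\P^{3}$, say). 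So the inductive step is unjustified, and no induction of this shape appears in the paper. Two smaller remarks: surjectivity of $\widetilde{\psi}^{p}|_{\tors}$ needs no decomposition --- it holds for any surjective regular homomorphism by the isogeny lemma of Saito and Murre \cite{S}\cite{M2}, which is how Lemma \ref{p1} proceeds; and your deduction of universality from the torsion isomorphism is essentially Lemma \ref{p1}, except that the existence of the universal regular homomorphism is not free: the paper obtains it from Saito's criterion \cite{S}, using the torsion isomorphism itself to bound $\dim B$ for every surjective regular homomorphism $A^{p}(Y)\rightarrow B$.
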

\begin{rem}
There is a $4$-fold $Y$ such that $A^{3}(Y)$ has infinite $l$-torsion for some prime $l$ \cite{Sch}.
Therefore the assumption on $CH_{0}(Y)$ is essential.
\end{rem}


Then we prove that we can construct a $4$-fold which does not meet the primitiveness condition for $p=3$
and gives the negative answer to the universality question in codimension $3$
if the generalized Bloch conjecture holds for a $3$-fold constructed by Colliot-Th\'el\`ene and Voisin
in the context of the study of 
the defect
\[Z^{4}(X)=Hdg^{4}(X,\Z)/H^{4}_{\alg}(X,\Z(2))\]
of 
the integral Hodge conjecture in degree $4$.
The $3$-fold is constructed as an example which 
conjecturally 
gives the negative answer to their question \cite[Section 6]{CTV}
asking whether $Z^{4}(X)$ is trivial for a $3$-fold $X$ such that 
$CH_{0}(X)$ is supported on a proper closed subset.
It is constructed as follows \cite[Subsection 5.7]{CTV}.
Let $G=\Z/5$.
We fix a generator $g\in G$, then $G$ acts on $\P^{1}=\Proj\C[X, Y]$ and $\P^{3}=\Proj\C[X_{0},\cdots, X_{3}]$ by
\[
g^{*}X = X, g^{*}Y=\zeta Y, g^{*}X_{i} = \zeta^{i}X_{i}\, (i=0,\cdots, 3),
\]
where $\zeta$ is a primitive $5$-th root of unity. 
Let $ H\subset \P^{1}\times\P^{3}$ be the hypersurface defined by a very general $G$-invariant polynomial of type $(3,4)$, and $X$ be a resolution of $H/G$.
Then $X$ satisfies the following two properties \cite[Proposition 5.7]{CTV}:
\begin{enumerate}
\item[(i)] $H^{p}(X, \mathcal{O}_{X}) =0$ for all $p>0$;
\item[(ii)] there is a class $\alpha\in H^{4}(X,\Z(2))=Hdg^{4}(X, \Z)$ such that $\alpha \cdot F = 5$,
but $C\cdot F$ is even  for any curves $C\subset X$, where $F$ is the class of fibers of the morphism $X\rightarrow \P^{1}$ induced by the first projection $H\rightarrow \P^{1}$.
\end{enumerate}
The property (i) implies $CH_{0}(X)=\Z$ if the generalized Bloch conjecture \cite[Conjecture 11.23]{V2} holds for $X$.
The property (ii) implies more than $Z^{4}(X)\neq 0$.
We have $\Coker\left(H^{4}(X, \Z(2))_{\tors} \rightarrow Z^{4}(X)\right)\neq 0$,
where a non-zero element is given by the image of $\alpha$.
Equivalently,
the sublattice 
\[H^{4}_{\alg}(X,\Z(2))/\tors \subset Hdg^{4}(X,\Z)/\tors\]
 is not primitive.
Our main theorem is:

\begin{thm}\label{t}
Let $X$ be a smooth projective variety such that 
$CH_{0}(X)$ is supported on a $2$-dimensional closed subset and 
the sublattice 
\[H^{4}_{\alg}(X,\Z(2))/\tors \subset Hdg^{4}(X,\Z)/\tors\]
is not primitive.
Then there exists a smooth elliptic curve $E$ such that 
the sublattice
\[N^{2}H^{5}(X\times E,\Z(3))/\tors \subset H^{5}(X \times E,\Z(3))/\tors\]
is not primitive
and the Abel-Jacobi map $\psi^{3}\colon A^{3}(X\times E)\rightarrow J^{3}_{a}(X\times E)$ is not universal.
\end{thm}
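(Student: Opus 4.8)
The plan is to deduce the non-universality of $\psi^3$ from the non-primitivity of $N^2H^5(X\times E,\Z(3))/\tors$, and to concentrate on the latter. Indeed, by the discussion following Walker's factorization in the introduction, if $\psi^3$ were universal then $\pi^3$ would be an isomorphism and the sublattice $N^2H^5(X\times E,\Z(3))/\tors$ would be primitive; so it suffices to produce an elliptic curve $E$ for which this sublattice is not primitive, the failure of universality then being automatic.

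For the construction, let $E$ be an arbitrary elliptic curve, fix a primitive class $\eta\in H^1(E,\Z)$ together with $\eta'\in H^1(E,\Z)$ satisfying $\langle\eta,\eta'\rangle=1$, and let $\alpha\in Hdg^4(X,\Z)$ witness the non-primitivity, so that $n\alpha\in H^4_{\alg}(X,\Z(2))$ for some $n>1$ while $\alpha\notin H^4_{\alg}(X,\Z(2))+H^4(X,\Z(2))_{\tors}$ (in the Colliot-Th\'el\`ene--Voisin example one takes $\alpha$ with $\alpha\cdot F=5$ and $n=2$). Writing $d=\dim_{\C}X$ and using the integral K\"unneth decomposition (valid since $H^*(E,\Z)$ is torsion-free), set
\[
\xi=\alpha\otimes\eta\in H^4(X,\Z(2))\otimes H^1(E)\subset H^5(X\times E,\Z(3)).
\]
Two reductions frame the problem. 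First, since $CH_0(X)$ is supported on a surface $S$, the group $CH_0(X\times E)$ is supported on the $3$-fold $S\times E$, so by Bloch--Srinivas $N^2H^5(X\times E,\Q)=H^5(X\times E,\Q)$; thus $N^2H^5/\tors$ has full rank and non-primitivity is exactly the statement that $[\xi]\neq0$ in the finite group $H^5/(N^2H^5+\tors)$. Second, $n\alpha$ is the class of an algebraic cycle $Z$ of codimension $2$, whence $n\xi=[Z]\otimes\eta$ is supported on $|Z|\times E$ and lies in $N^2H^5$; equivalently $[\xi]$ is $n$-torsion. It therefore remains to prove $\xi\notin N^2H^5(X\times E,\Z(3))+\tors$.

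To detect $[\xi]$ I would use the discriminant pairing. Non-primitivity provides a class $u\in H^{2d-4}(X,\Q)$ pairing integrally with $H^4_{\alg}(X,\Z(2))$ yet with $\alpha\cdot u\notin\Z$ (for the $3$-fold one takes $u=\tfrac12[F]$, so that $\alpha\cdot u=\tfrac52$ while $C\cdot u\in\Z$ for every curve $C$ by property (ii)). Put $v=u\otimes\eta'\in H^{2d-3}(X\times E,\Q)$. Then
\[
\langle v,\xi\rangle=(\alpha\cdot u)\,(\eta'\cdot\eta)\notin\Z,\qquad \langle v,H^5(X\times E,\Z(3))_{\tors}\rangle=0,
\]
so once we know $\langle v,N^2H^5\rangle\subseteq\Z$ it follows that pairing with $v$ descends to a nonzero homomorphism $H^5/(N^2H^5+\tors)\to\Q/\Z$ on which $[\xi]$ has order $n$; this yields $\xi\notin N^2H^5+\tors$ and completes the proof. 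On a generator $(\iota_W)_*\zeta$ of $N^2H^5$, with $W\subset X\times E$ of codimension $2$, $\widetilde W\to W$ a resolution, $f,g$ the two projections of $\widetilde W$, and $\zeta\in H^1(\widetilde W,\Z(1))$, the projection formula gives
\[
\langle v,(\iota_W)_*\zeta\rangle=\int_{\widetilde W}\zeta\cup f^*u\cup g^*\eta'.
\]
When $f(\widetilde W)$ has codimension $\ge2$ in $X$ the cycle $W$ lies over a curve $C\subset X$ and the integral reduces, via $u\cdot C\in\Z$, to an integer; the substantive case is when $f(\widetilde W)$ is a divisor, where one must show the transcendental factor $\zeta\cup g^*\eta'$ still pairs integrally against $f^*u$.

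The hard part is precisely this last integrality $\langle v,N^2H^5\rangle\subseteq\Z$ for supports $W$ whose image $f(\widetilde W)$ is a divisor in $X$. Here I would invoke the decomposition of the diagonal of $X$ furnished by Bloch--Srinivas from the hypothesis that $CH_0(X)$ is supported on a surface: inserting $\Delta_X=\Gamma_1+\Gamma_2$ (with $\Gamma_1$ supported on $X\times S$ and $\Gamma_2$ on $D\times X$, $D$ a divisor) rewrites $f^*u$ through algebraic cycles on $X$, reducing every contribution to intersection numbers of the form $(\text{curve})\cdot u\in\Z$, controlled by the integrality of $u$ on $1$-cycles and by property (ii). The delicate point—the main obstacle—is that this diagonal decomposition holds only after clearing denominators by some integer $N$, so one obtains $N\langle v,N^2H^5\rangle\subseteq\Z$ directly and must argue that the divisibility $n$ is coprime to the surviving part of $N$ (for the Colliot-Th\'el\`ene--Voisin $3$-fold this is the parity statement, $N$ odd against $n=2$). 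I expect the bulk of the work to lie in making this integral refinement precise. Note finally that the naive slant map $\Phi(\cdot)=(\mathrm{pr}_X)_*(\,\cdot\cup\mathrm{pr}_E^*\eta')$, although it satisfies $\Phi(\xi)=\alpha$, is not a morphism of Hodge structures and its image on $N^2H^5$ is not contained in $H^4_{\alg}$; this is why the argument must proceed through the torsion pairing with $v$ rather than through such a containment.
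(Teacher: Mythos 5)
Your opening reduction is fine: by Walker's factorization, universality of $\psi^{3}$ would force $\Ker(\pi^{3})=0$, i.e.\ primitivity, so it suffices to exhibit $E$ with $N^{2}H^{5}(X\times E,\Z(3))/\tors$ non-primitive; and your class $\xi=\alpha\otimes\eta$ does satisfy $n\xi\in N^{2}H^{5}(X\times E,\Z(3))+\tors$. But the heart of the theorem is precisely the step you leave open: the integrality $\langle v,N^{2}H^{5}\rangle\subseteq\Z$. This is not an ``integral refinement'' to be supplied at the end; it \emph{is} the statement to be proved, and the route you sketch cannot be completed from the hypotheses. The decomposition of the diagonal of $X$ only holds after multiplying by an integer $N$ depending on $X$, and nothing in the hypotheses (``$CH_{0}(X)$ supported on a surface'' plus non-primitivity of $H^{4}_{\alg}/\tors$) relates $N$ to the order $n$ of $[\alpha]$; the coprimality you invoke (``$N$ odd against $n=2$'') is special to the Colliot-Th\'el\`ene--Voisin example and is itself unproved there. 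Two further integral problems: (a) you generate $N^{2}H^{5}$ by pushforwards $(\iota_{W})_{*}\zeta$ with $\zeta\in H^{1}(\widetilde W,\Z(1))$ from resolutions, but with $\Z$-coefficients the coniveau filtration (defined by vanishing on complements, which is what enters $\Ker(\pi^{3})$) is not known to be generated by such pushforwards, and since you need an upper bound on \emph{all} of $N^{2}H^{5}$ this matters; (b) your appeal to Bloch--Srinivas for $N^{2}H^{5}(X\times E,\Q)=H^{5}(X\times E,\Q)$ is false in general: for $X=Bl_{S}\P^{4}$ with $S$ a K3 surface one has $CH_{0}(X)=\Z$ yet $h^{3,1}(X)=p_{g}(S)\neq 0$, so $H^{4}(X)\otimes H^{1}(E)$ contains $(4,1)$-classes, which cannot lie in a coniveau-$2$ sub-Hodge structure. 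The hypothesis on $CH_{0}$ yields far less rational coniveau than you assume; the paper extracts only torsion information from it.

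The paper argues in the opposite direction, cycle-theoretically rather than lattice-theoretically, and this difference is essential. Since $CH_{0}(X\times E)$ is supported on a $3$-fold, Theorem \ref{tA} (a Roitman-type theorem for the Walker map, proved via the Bloch map, the coniveau spectral sequence, and the decomposition of the diagonal) gives Corollary \ref{cA}: primitivity of $N^{2}H^{5}/\tors$ is \emph{equivalent} to $\psi^{3}|_{\tors}$ being an isomorphism. One is thus reduced to producing a nonzero torsion class in $\Ker(\psi^{3})$, which Proposition \ref{tB} does: lift $\alpha$ to $\widetilde\alpha\in CH^{2}(X)\otimes\Q/\Z$, choose $E$ with $j(E)$ transcendental over a field of definition of $(X,\widetilde\alpha)$, use Schoen's theorem to see that the exterior product class $\beta\in CH^{3}(X\times E)_{\tors}$ is nonzero, and check via Deligne cohomology (torsion kills the intermediate Jacobian part, and $\alpha$ dies in $Hdg^{4}(X,\Z)\otimes\Q/\Z$) that $\beta\in\Ker(\psi^{3})$. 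Note that the careful choice of $E$ is indispensable in that argument, whereas your proposal claims to work for an arbitrary elliptic curve; this is a further warning sign that the missing integrality cannot be obtained by the means you describe.
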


\begin{cor}\label{c}
Let $X$ be the $3$-fold constructed by Colliot-Th\'el\`ene and Voisin.
Assume that the generalized Bloch conjecture holds for $X$.
Then there exists a smooth elliptic curve $E$ such that the sublattice
\[N^{2}H^{5}(X\times E,\Z(3))/\tors \subset H^{5}(X\times E,\Z(3))/\tors\]
is not primitive
and the Abel-Jacobi map $\psi^{3}\colon A^{3}(X\times E)\rightarrow J^{3}_{a}(X\times E)$ is not universal.
\end{cor}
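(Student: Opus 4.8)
The plan is to obtain Corollary~\ref{c} as an immediate application of Theorem~\ref{t}: it suffices to check that the Colliot-Th\'el\`ene--Voisin $3$-fold $X$ satisfies the two hypotheses of that theorem, after which the elliptic curve $E$ and both conclusions are furnished by the theorem itself. Thus the whole task reduces to translating properties~(i) and~(ii) of the construction into the two assumptions, that $CH_{0}(X)$ is supported on a $2$-dimensional closed subset and that $H^{4}_{\alg}(X,\Z(2))/\tors \subset Hdg^{4}(X,\Z)/\tors$ is not primitive.

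For the first hypothesis I would invoke the standing assumption of the corollary. By property~(i) we have $H^{p}(X,\mathcal{O}_{X})=0$ for all $p>0$, equivalently $H^{p,0}(X)=0$ for all $p>0$, so $X$ carries no nonzero global holomorphic forms. Under this vanishing the generalized Bloch conjecture predicts $CH_{0}(X)=\Z$, as recorded above; granting the conjecture for $X$, the group $CH_{0}(X)$ is therefore supported on a single point, i.e.\ on a closed subset of dimension $0$, and in particular on a $2$-dimensional closed subset. This yields the first hypothesis of Theorem~\ref{t}.

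For the second hypothesis I would unwind property~(ii), exactly as in the discussion preceding Theorem~\ref{t}. Intersecting with the fiber class $F$ defines a homomorphism $Hdg^{4}(X,\Z)\to\Z$. Every algebraic class $[C]$ satisfies $C\cdot F\in 2\Z$, and every torsion class pairs to $0$ with $F$ since $H^{6}(X,\Z)\cong\Z$ is torsion-free; hence reduction modulo $2$ factors through a homomorphism $Z^{4}(X)=Hdg^{4}(X,\Z)/H^{4}_{\alg}(X,\Z(2))\to\Z/2$ killing the image of $H^{4}(X,\Z(2))_{\tors}$. As $\alpha\cdot F=5$ is odd, the image of $\alpha$ is a nonzero element of the cokernel, so $\Coker\!\left(H^{4}(X,\Z(2))_{\tors}\to Z^{4}(X)\right)\neq 0$. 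Since $Z^{4}(X)$ is finite (on a $3$-fold the rational Hodge classes in $H^{4}$ are algebraic by hard Lefschetz and the Lefschetz $(1,1)$-theorem, so $Hdg^{4}\otimes\Q=H^{4}_{\alg}\otimes\Q$), this nonvanishing is equivalent to the non-primitiveness of $H^{4}_{\alg}(X,\Z(2))/\tors$ inside $Hdg^{4}(X,\Z)/\tors$, which is the second hypothesis.

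With both hypotheses in place I would conclude by applying Theorem~\ref{t} to produce the smooth elliptic curve $E$ for which $N^{2}H^{5}(X\times E,\Z(3))/\tors\subset H^{5}(X\times E,\Z(3))/\tors$ is not primitive and $\psi^{3}\colon A^{3}(X\times E)\to J^{3}_{a}(X\times E)$ is not universal. The only genuinely nontrivial input is the generalized Bloch conjecture used to pin down $CH_{0}(X)=\Z$, which is precisely why the corollary must be stated conditionally and is the one step that cannot be removed; everything else is a direct reading of the Colliot-Th\'el\`ene--Voisin properties, so I do not expect any serious obstacle beyond that conditional step.
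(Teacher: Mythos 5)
Your proposal is correct and follows exactly the paper's (implicit) route: the corollary is obtained by verifying the two hypotheses of Theorem~\ref{t} from properties (i) and (ii) of the Colliot-Th\'el\`ene--Voisin construction, which is precisely the discussion the paper gives in the introduction before stating Theorem~\ref{t}. Your write-up additionally justifies the step the paper only asserts, namely that $\Coker\bigl(H^{4}(X,\Z(2))_{\tors}\rightarrow Z^{4}(X)\bigr)\neq 0$ is equivalent to non-primitiveness, via the finiteness of $Z^{4}(X)$ on a $3$-fold; this is a correct and welcome elaboration, not a departure.
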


This paper is organized as follows.
In Section $2$, we study regular homomorphisms on the torsion subgroup $A^{p}(X)_{\tors}$.
Then we prove Theorem \ref{tA} and its corollary.
In Section $3$, we prove a proposition on non-algebraic integral Hodge classes of Koll\'ar type
and torsion elements in the Abel-Jacobi kernel.
Then we conclude Theorem \ref{t} and its ``homology counterpart''.
In Section $4$, we discuss stably birational invariants related to our problem.
In Appendix $A$, we prove the Roitman theorem for the Walker maps by using the formalism of decomposition of the diagonal.

Throughout this paper, the base field is the field of complex numbers $\C$.

\begin{not*}
For a smooth projective variety $X$, we denote by $N^{i}H^{j}(X,\Z(k))$ the coniveau filtration on $H^{j}(X, \Z(k))$. 
Recall that it is defined as
\[
N^{i}H^{j}(X,\Z(k)) = \Ker\left(H^{j}(X,\Z(k))\rightarrow \varinjlim_{Z\subset X} H^{j}(X-Z, \Z(k))\right),
\]
where $Z\subset X$ runs through all codimension $\geq i$ closed subsets of $X$.
For an Abelian group $G$ and a prime number $l$, we denote by $G_{l\mathchar`-\tors}$ the subgroup of $l$-primary torsion elements of $G$.
\end{not*}

\begin{ack*}
The author wishes to thank his advisor Lawrence Ein for constant support and warm encouragement.
He wishes to thank Henri Gillet for fruitful discussions.
He also thanks Jean-Louis Colliot-Th\'el\`ene and Mark Walker for helpful comments.
Finally, he thanks the anonymous referee for valuable suggestions which improve the exposition of this paper.
\end{ack*}

\section{Regular homomorphisms on the torsion subgroup $A^{p}(X)_{\tors}$}

\begin{lem}\label{p1}
Let $X$ be a smooth projective variety and $\phi\colon A^{p}(X)\rightarrow A$ be a surjective regular homomorphism.
Assume that the restriction $\phi|_{\tors}\colon A^{p}(X)_{\tors}\rightarrow A_{\tors}$ is an isomorphism.
Then $\phi$ is universal.
\end{lem}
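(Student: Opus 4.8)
The plan is to compare $\phi$ with the universal regular homomorphism and reduce everything to the behaviour of these maps on torsion. First I would invoke the existence of the universal regular homomorphism (the algebraic representative) $u\colon A^{p}(X)\rightarrow B$, where I write $B=\mathrm{Ab}^{p}(X)$, due to Saito. By its universal property $\phi$ factors as $\phi=f\circ u$ for a surjective homomorphism of Abelian varieties $f\colon B\rightarrow A$. It suffices to show that $f$ is an isomorphism: for then, given any regular homomorphism $\phi'\colon A^{p}(X)\rightarrow A'$, universality of $u$ gives $\phi'=f'\circ u$, and $g:=f'\circ f^{-1}$ satisfies $g\circ\phi=\phi'$, so $\phi$ is universal. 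Thus the statement reduces to proving $\Ker f=0$.

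The engine of the proof is the following general fact, which I would isolate as a sub-lemma: every surjective regular homomorphism $\psi\colon A^{p}(X)\rightarrow V$ to an Abelian variety is surjective on torsion. Granting this and applying it to $u$, the map $u|_{\tors}$ is surjective; on the other hand, since $\phi|_{\tors}=f|_{\tors}\circ u|_{\tors}$ is injective by hypothesis, $u|_{\tors}$ is injective. Hence $u|_{\tors}$ is an isomorphism, and therefore $f|_{\tors}$ is injective. A surjective homomorphism of Abelian varieties that is injective on torsion is an isomorphism, because a non-trivial kernel --- whether positive-dimensional or finite --- would contain a non-zero torsion point. This gives $\Ker f=0$ and completes the reduction.

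It remains to prove the sub-lemma, which I expect to be the main obstacle, since it is the only non-formal input. I would realize $\psi$ by an algebraic family: a smooth connected projective variety $S$ with a base point $s_{0}$ and a codimension $p$ cycle $\Gamma$ on $S\times X$ whose associated morphism $\rho\colon S\rightarrow V$, $\rho(s)=\psi(\Gamma_{*}(s-s_{0}))$, is surjective. Fixing $a\in V_{\tors}$, I would choose a general smooth complete intersection curve $\iota\colon C\hookrightarrow S$ through $s_{0}$ so that the induced homomorphism $\rho_{*}\colon J(C)\rightarrow V$ on Jacobians is surjective; since surjections of Abelian varieties are surjective on torsion, I can lift $a$ to a torsion class $\gamma\in J(C)_{\tors}$, represented by a degree-zero torsion divisor $D$ on $C$. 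Then $\Gamma_{*}(\iota_{*}D)\in A^{p}(X)$ maps to $\rho_{*}(\gamma)=a$ under $\psi$, and it is a torsion cycle because $\iota_{*}D$ is torsion in $CH_{0}(S)$ and $\Gamma_{*}$ is a homomorphism on Chow groups. The points needing care are the standard but non-trivial facts that a surjective regular homomorphism is realized by a single such family with surjective $\rho$, and that the image of a regular homomorphism is an Abelian subvariety; modulo these, the passage through $C$ together with the observation that the pushforward of a torsion $0$-cycle is a torsion codimension $p$ cycle is exactly what forces surjectivity on torsion, and hence $\Ker f=0$.
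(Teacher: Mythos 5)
Your argument has one genuine gap, and it occurs at the very first step: you invoke the existence of the universal regular homomorphism (the algebraic representative) $u\colon A^{p}(X)\rightarrow B$ as a known theorem ``due to Saito.'' No such unconditional theorem exists. Existence is known for $p=1$ (Picard), $p=2$ (Murre, via Merkurjev--Suslin) and $p=\dim X$ (Albanese), but for $3\leq p\leq \dim X-1$ --- precisely the range in which this lemma is applied --- the existence of an algebraic representative is an open problem. What Saito actually proves (and what the paper cites as \cite[Theorem 2.2]{S}, see also \cite[Proposition 2.1]{M2}) is a \emph{criterion}: a universal regular homomorphism exists if and only if the dimensions $\dim V$ are bounded as $\psi\colon A^{p}(X)\rightarrow V$ ranges over all surjective regular homomorphisms. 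So, as written, your proof rests on a statement that is not available, and everything after it (the factorization $\phi=f\circ u$, etc.) is conditional on that missing input.

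The good news is that your own sub-lemma is exactly what is needed to close the gap, and this is precisely how the paper proceeds. Apply the sub-lemma to an arbitrary surjective regular homomorphism $\psi\colon A^{p}(X)\rightarrow V$: then $\psi|_{\tors}$ is surjective, so $V_{\tors}$ is a quotient of $A^{p}(X)_{\tors}\cong A_{\tors}$, whence
\[
\dim V=\tfrac{1}{2}\corank V_{\tors}\leq \tfrac{1}{2}\corank A_{\tors}=\dim A .
\]
This is the bound required by Saito's criterion, which then yields the existence of $u$; after that, your concluding argument ($f$ surjective, $f|_{\tors}$ injective, a nontrivial kernel would contain nonzero torsion, hence $\Ker f=0$) is correct and matches the paper's final step. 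One further remark: your proof of the sub-lemma takes a slightly different and arguably cleaner route than the paper's. The paper uses \cite[Proposition 1.2]{S} to produce an abelian variety $C$ with an isogeny $C\rightarrow V$ and then pushes torsion points of $C$ into $A^{p}(X)_{\tors}$, which implicitly uses the nonobvious fact that a torsion point $s$ of an abelian variety yields a torsion class $(s)-(0)$ in $CH_{0}$; your restriction to a complete-intersection curve avoids this, since on a curve torsion points of the Jacobian literally are torsion divisor classes. But you still need the same input (a single family with surjective $\rho$, i.e.\ \cite[Proposition 1.2]{S} or \cite[Lemma 1.6.2]{M2}), so the sub-lemma is not more elementary, only differently organized.
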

\begin{proof}
First we prove the existence of a universal regular homomorphism $\phi_{0}\colon A^{p}(X)\rightarrow A_{0}$.
By Saito's criterion \cite[Theorem 2.2]{S} (see also \cite[Proposition 2.1]{M2}),
it is enough to prove $\dim B\leq \dim A$ for any surjective regular homomorphism $\psi\colon A^{p}(X)\rightarrow B$.
Such a homomorphism $\psi$ restricts to a surjection $\psi|_{\tors}\colon A^{p}(X)_{\tors}\rightarrow B_{\tors}$.
Indeed, by \cite[Proposition 1.2]{S} (see also \cite[Lemma 1.6.2]{M2}), there exists an Abelian variety $C$ and $\Gamma \in CH^{p}(C\times X)$ such that the composition
\[
C\rightarrow A^{p}(X) \rightarrow B, s\mapsto \psi(\Gamma_{*}(s-s_{0}))
\]
is an isogeny,
and the restriction $C_{\tors}\rightarrow B_{\tors}$ is a surjection.
By assumption, we have $A^{p}(X)_{\tors}\cong A_{\tors}$.
Then we have
\[
\dim B = \frac{1}{2}\corank B_{\tors} \leq \frac{1}{2}\corank A_{\tors} = \dim A.
\]
The existence follows.

The map $\phi_{0}$ is surjective since the image of a regular homomorphism is an Abelian variety \cite[Lemma 1.6.2]{M2}.
Thus $\phi_{0}$ restricts to a surjection $\phi_{0}|_{\tors}\colon A^{p}(X)_{\tors}\rightarrow (A_{0})_{\tors}$.
The induced map $A_{0}\rightarrow A$ is surjective 
 and restricts to an isomorphism $(A_{0})_{\tors}\cong A_{\tors}$,
therefore it is an isomorphism.
The proof is done.
\end{proof}

We review the Bloch-Ogus theory on the coniveau spectral sequence \cite{BO}.
For a smooth projective variety $X$,
we define $\mathcal{H}^{q}(\Z(r))$ to be the Zariski sheaf on $X$ associated to the presheaf $U\mapsto H^{q}(U,\Z(r))$.
Then the $E_{2}$ term of the coniveau spectral sequence is given by
\[
E^{p,q}_{2} = H^{p}(X, \mathcal{H}^{q}(\Z(r))) \Rightarrow N^{\bullet}H^{p+q}(X, \Z(r)),
\]
and we have $E^{p,q}_{2}=0$ if $p>q$ \cite[Corollary 6.3]{BO}.
We also have
\[E^{p,q}_{2}= 0 \text{ if }(p,q)\not \in [0,\dim X]\times [0, \dim X].\]

Let $f^{p}\colon H^{p-1}(X, \mathcal{H}^{p}(\Z(p)))\rightarrow H^{2p-1}(X,\Z(p))$ be the edge homomorphism.

\begin{lem}\label{t1}
There is a short exact sequence:
\begin{eqnarray*}
0&\rightarrow &H^{p-1}(X, \mathcal{K}_{p})\otimes \Q_{l}/\Z_{l}\\
&\rightarrow &\Ker\left(f^{p}\otimes \Q_{l}/\Z_{l}\colon H^{p-1}(X, \mathcal{H}^{p}(\Z(p)))\otimes \Q_{l}/\Z_{l} \rightarrow N^{p-1}H^{2p-1}(X, \Z(p))\otimes \Q_{l}/\Z_{l}\right)\\
&\rightarrow &\Ker\left(\widetilde{\psi}^{p}|_{l\mathchar`-\tors}\colon A^{p}(X)_{l\mathchar`-\tors}\rightarrow J(N^{p-1}H^{2p-1}(X, \Z(p)))_{l\mathchar`-\tors} \right) \rightarrow 0
\end{eqnarray*}
for any smooth projective variety $X$ and any prime number $l$, where $\mathcal{K}_{p}$ is the Zariski sheaf on $X$ associated to the Quillen K-theory.
\end{lem}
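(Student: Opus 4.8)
The plan is to pass to $\Q_l/\Z_l$-coefficients and read everything off the Bloch--Ogus coniveau spectral sequence together with the Bloch--Quillen formula $CH^p(X)=H^p(X,\mathcal{K}_p)$. First I would take the Bockstein sequences attached to $0\to\mathcal{K}_p\xrightarrow{l^n}\mathcal{K}_p\to\mathcal{K}_p/l^n\to 0$, and pass to the colimit over $n$ (Zariski cohomology commutes with filtered colimits on the Noetherian space $X$) to obtain the short exact sequence
\[
0\to H^{p-1}(X,\mathcal{K}_p)\otimes\Q_l/\Z_l \xrightarrow{\alpha} H^{p-1}(X,\mathcal{K}_p\otimes\Q_l/\Z_l)\xrightarrow{\beta} CH^p(X)_{l\mathchar`-\tors}\to 0 .
\]
The decisive input is the Bloch--Kato/Gersten isomorphism $\mathcal{K}_p\otimes\Q_l/\Z_l\cong\mathcal{H}^p(\Q_l/\Z_l(p))$ of Zariski sheaves (the norm residue isomorphism together with the Gersten resolutions for both sheaves), which identifies the middle term with $H^{p-1}(X,\mathcal{H}^p(\Q_l/\Z_l(p)))$, i.e. the $E_2^{p-1,p}$-term of the coniveau spectral sequence with $\Q_l/\Z_l$-coefficients. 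Under this identification the sheaf-level cycle class map $\mathcal{K}_p\to\mathcal{H}^p(\Z(p))$ turns $\alpha$ into the map induced by $H^{p-1}(X,\mathcal{K}_p)\to H^{p-1}(X,\mathcal{H}^p(\Z(p)))$.

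Next I would identify the torsion Walker map with a Bockstein. By the standard description of the Abel--Jacobi map on torsion, a class $z\in CH^p(X)_{l\mathchar`-\tors}$ has $\widetilde{\psi}^p(z)$ computed as the image under the coefficient-$\Q_l/\Z_l$ edge homomorphism $\bar{f}^p\colon H^{p-1}(X,\mathcal{H}^p(\Q_l/\Z_l(p)))\to H^{2p-1}(X,\Q_l/\Z_l(p))$ of any $\beta$-lift of $z$, the target being $J(N^{p-1}H^{2p-1}(X,\Z(p)))_{l\mathchar`-\tors}=(N^{p-1}H^{2p-1}(X,\Z(p))/\tors)\otimes\Q_l/\Z_l$; the homologically trivial classes (which, on torsion, are the algebraically trivial ones, so that the induced map is genuinely $\widetilde{\psi}^p|_{l\mathchar`-\tors}$ on $A^p(X)_{l\mathchar`-\tors}$) are exactly those whose edge image lands in this divisible part. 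The ambiguity in the lift is $\Image(\alpha)$, and $\bar{f}^p\circ\alpha=0$ because $\Image(\alpha)=\Ker(\beta)$ consists of lifts of the zero cycle; this is the point at which one must verify the compatibility of the cycle class maps with Walker's construction. Hence $\bar{f}^p$ descends through $\beta$ and
\[
\Ker(\widetilde{\psi}^p|_{l\mathchar`-\tors})=\beta(\Ker\bar{f}^p)=\Ker(\bar{f}^p)/\Image(\alpha),
\]
which, combined with the first sequence, yields
\[
0\to H^{p-1}(X,\mathcal{K}_p)\otimes\Q_l/\Z_l\to \Ker(\bar{f}^p)\to \Ker(\widetilde{\psi}^p|_{l\mathchar`-\tors})\to 0 .
\]

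Finally I would reconcile $\Ker(\bar{f}^p)$ with the group $\Ker(f^p\otimes\Q_l/\Z_l)$ appearing in the statement. This amounts to comparing the coefficient-$\Q_l/\Z_l$ groups $H^{p-1}(X,\mathcal{H}^p(\Q_l/\Z_l(p)))$ and $H^{2p-1}(X,\Q_l/\Z_l(p))$ with their integral counterparts $H^{p-1}(X,\mathcal{H}^p(\Z(p)))\otimes\Q_l/\Z_l$ and $N^{p-1}H^{2p-1}(X,\Z(p))\otimes\Q_l/\Z_l$ through the universal coefficient sequences, and checking that the resulting correction terms do not enlarge the kernel; here I would exploit the Bloch--Ogus vanishing $E_2^{p,q}=0$ for $p>q$ (and the range bounds on the bidegrees) to control the relevant $\Tor$ and Bockstein contributions. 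I expect this reconciliation — that is, the precise matching of the coefficient-$\Q_l/\Z_l$ edge map with $f^p\otimes\Q_l/\Z_l$, together with the verification that $\bar{f}^p$ annihilates exactly the divisible lifts — to be the main obstacle, since it is where the distinction between tensoring the integral groups with $\Q_l/\Z_l$ and taking honest $\Q_l/\Z_l$-coefficient cohomology has to be handled with care.
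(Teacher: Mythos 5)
Your skeleton is recognizably the same as the paper's --- both arguments produce the lemma by a snake--lemma comparison of a short exact sequence with left term $H^{p-1}(X,\mathcal{K}_{p})\otimes \Q_{l}/\Z_{l}$ against the edge homomorphism tensored with $\Q_{l}/\Z_{l}$ --- but the paper obtains that exact sequence by citation, namely Ma's theorem \cite[Theorem 5.1]{Ma} together with the construction of the Bloch map $\lambda^{p}_{l}$ \cite{B1}\cite{C}, whereas you try to manufacture it, and the two ingredients you invent are each a genuine gap. A repairable one first: $0\to\mathcal{K}_{p}\xrightarrow{l^{n}}\mathcal{K}_{p}\to\mathcal{K}_{p}/l^{n}\to 0$ is not exact, since $\mathcal{K}_{p}$ is not $l$-torsion free (already $K_{p}(\C)$ has nonzero torsion for $p$ odd, by Suslin, and it is a direct summand of every stalk), so your first sequence needs mod-$l^{n}$ K-theory sheaves. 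The fatal one is the ``decisive input'' $\mathcal{K}_{p}\otimes\Q_{l}/\Z_{l}\cong\mathcal{H}^{p}(\Q_{l}/\Z_{l}(p))$: the norm residue theorem concerns \emph{Milnor} K-theory, while $\mathcal{K}_{p}$ is \emph{Quillen} K-theory, and for $p\geq 3$ these differ at the residue fields. By Quillen--Lichtenbaum, mod-$l$ Quillen K-theory of a field mixes \'etale cohomology of several weights (for instance $K_{3}(F;\Z/l)$ has an $H^{1}(F,\Z/l(2))$ constituent besides $H^{3}(F,\Z/l(3))$), so there is no such sheaf isomorphism in degree $p$. Since the lemma is used in the paper precisely for $p=3$ and $p=\dim X-1$, you cannot retreat to $p\leq 2$ where Merkurjev--Suslin would save you. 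The whole point of Ma's theorem, which yields $0\to H^{p-1}(X,\mathcal{K}_{p})\otimes\Q_{l}/\Z_{l}\to H^{p-1}(X,\mathcal{H}^{p}(\Z(p)))\otimes\Q_{l}/\Z_{l}\to A^{p}(X)_{l\mathchar`-\tors}\to 0$ compatibly with the Bloch map, is that it avoids any degree-$p$ comparison: only the tail of the Gersten complexes (involving $K_{2}$, $K_{1}$, $K_{0}$ of residue fields) matters for $H^{p-1}$ and $H^{p}$, where Merkurjev--Suslin suffices.

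The second genuine gap is the right-hand term. Your Bockstein ends at $CH^{p}(X)_{l\mathchar`-\tors}$ (Bloch--Quillen), and you pass to $A^{p}(X)_{l\mathchar`-\tors}$ by asserting that homologically trivial torsion classes are algebraically trivial. That assertion says the Griffiths group $\Griff^{p}(X)$ has no $l$-torsion, which is neither standard nor true in general; if $\Griff^{p}(X)_{l\mathchar`-\tors}\neq 0$, your identification $\Ker(\widetilde{\psi}^{p}|_{l\mathchar`-\tors})=\Ker(\bar{f}^{p})/\Image(\alpha)$ fails at exactness on the right. The paper never meets this issue, because in Ma's sequence the group $A^{p}(X)_{l\mathchar`-\tors}$ appears automatically: Bloch--Ogus theory computes $H^{p}(X,\mathcal{H}^{p}(\Z(p)))$ as cycles modulo \emph{algebraic} (not homological) equivalence, so the relevant boundary lands in $A^{p}$ by construction. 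Finally, the two steps you explicitly defer --- that the $\Q_{l}/\Z_{l}$-coefficient edge map computes the Walker map on torsion, and the reconciliation of coefficient cohomology with the integrally-tensored groups --- are exactly where the paper does its only real work: it uses divisibility of $A^{p}(X)_{l\mathchar`-\tors}$ \cite[Lemma 7.10]{BO} plus finiteness of $\Ker\left(N^{p-1}H^{2p-1}(X,\Z(p))\otimes\Q_{l}/\Z_{l}\to H^{2p-1}(X,\Z(p))\otimes\Q_{l}/\Z_{l}\right)$ to get uniqueness of lifts, and \cite[Proposition 3.7]{B1} to identify $\lambda^{p}_{l}$ with $\psi^{p}|_{l\mathchar`-\tors}$; without these, your claims that $\bar{f}^{p}\circ\alpha=0$ and that $\bar{f}^{p}$ descends through $\beta$ to the Walker map remain unproved.
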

\begin{proof}
We use the Bloch map $\lambda^{p}_{l} \colon CH^{p}(X)_{l\mathchar`-\tors} \rightarrow H^{2p-1}(X, \Q_{l}/\Z_{l}(p))$ \cite{B1} (see also \cite{C}).
By the construction of the Bloch map and \cite[Theorem 5.1]{Ma}, we have a commutative diagram with exact rows:
\[
\xymatrixcolsep{1pc}
\xymatrix{
0 \ar[r] & H^{p-1}(X, \mathcal{K}_{p})\otimes \Q_{l}/\Z_{l} \ar[r] \ar[d] & H^{p-1}(X, \mathcal{H}^{p}(\Z(p)))\otimes \Q_{l}/\Z_{l} \ar[r]\ar[d]^{f^{p}\otimes \Q_{l}/\Z_{l}} &A^{p}(X)_{l\mathchar`-\tors} \ar[r]\ar[d]^{-\lambda^{p}_{l}}& 0\\
  & 0 \ar[r] &H^{2p-1}(X, \Z(p))\otimes \Q_{l}/\Z_{l} \ar[r] & H^{2p-1}(X, \Q_{l}/\Z_{l}(p)) &\\
}.
 \]
We prove that it induces another commutative diagram:
\[
\xymatrixcolsep{1pc}
\xymatrix{
0 \ar[r] & H^{p-1}(X, \mathcal{K}_{p})\otimes \Q_{l}/\Z_{l} \ar[r] \ar[d] & H^{p-1}(X, \mathcal{H}^{p}(\Z(p)))\otimes \Q_{l}/\Z_{l} \ar[r]\ar[d]^{f^{p}\otimes \Q_{l}/\Z_{l}} &A^{p}(X)_{l\mathchar`-\tors} \ar[r]\ar[d]^{-\widetilde{\lambda}^{p}_{l}}& 0\\
& 0 \ar[r] & N^{p-1}H^{2p-1}(X, \Z(p))\otimes \Q_{l}/\Z_{l} \ar@{=}[r] & N^{p-1}H^{2p-1}(X, \Z(p))\otimes \Q_{l}/\Z_{l} \ar[r]&0\\
}.
 \]
 It is enough to prove the image of $H^{p-1}(X, \mathcal{K}_{p})\otimes \Q_{l}/\Z_{l}$ in $N^{p-1}H^{2p-1}(X, \Z(p))\otimes \Q_{l}/\Z_{l}$ is zero.
This follows by observing that $H^{p-1}(X, \mathcal{K}_{p})\otimes \Q_{l}/\Z_{l}$ is divisible and
\begin{eqnarray*}
&&\Ker\left(N^{p-1}H^{2p-1}(X, \Z(p))\otimes \Q_{l}/\Z_{l}\rightarrow H^{2p-1}(X, \Z(p))\otimes \Q_{l}/\Z_{l}\right) \\
&&= \Coker\left(H^{2p-1}(X,\Z(p))_{l\mathchar`-\tors}\rightarrow (H^{2p-1}(X, \Z(p))/N^{p-1}H^{2p-1}(X, \Z(p)))_{l\mathchar`-\tors}\right)
\end{eqnarray*}
is finite.
We prove that $\widetilde{\lambda}^{p}$ coincides with the restriction $\widetilde{\psi}^{p}|_{l\mathchar`-\tors}$.
In commutative triangles
\[
\xymatrixcolsep{5pc}
\xymatrix{
 &N^{p-1}H^{2p-1}(X,\Z(p))\otimes \Q_{l}/\Z_{l}\ar[d]\\
 A^{p}(X)_{l\mathchar`-\tors}\ar[ur]^-{\widetilde{\lambda}^{p}_{l}\,(\text{resp. }\widetilde{\psi}^{p}|_{l\mathchar`-\tors})}\ar[r]_-{\lambda^{p}_{l}\,(\text{resp. }\psi^{p}|_{l\mathchar`-\tors})}& H^{2p-1}(X,\Z(p))\otimes \Q_{l}/\Z_{l}\\
},
\]
$\widetilde{\lambda}^{p}_{l}$ (resp. $\widetilde{\psi}^{p}|_{l\mathchar`-\tors}$) is  the unique lift of $\lambda^{p}_{l}$ (resp. $\psi^{p}|_{l\mathchar`-\tors}$) since $A^{p}(X)_{l\mathchar`-\tors}$ is divisible \cite[Lemma 7.10]{BO}.
Therefore it is enough to prove that $\lambda^{p}_{l}$ coincides with $\psi^{p}|_{l\mathchar`-\tors}$.
This follows from \cite[Proposition 3.7]{B1}.
The proof is done by the snake lemma.
\end{proof}

\begin{proof}[Proof of Theorem \ref{tA}]
The second statement follows from the first one by Lemma \ref{p1}.

We prove that $\widetilde{\psi}^{3}|_{\tors}$ is an isomorphism.
By Lemma \ref{t1}, it is enough to prove that 
\[\Ker(f^{3}) = \Image\left(H^{0}(Y, \mathcal{H}^{4}(\Z(3)))\rightarrow H^{2}(Y, \mathcal{H}^{3}(\Z(3)))\right)\] is torsion.
The group $H^{0}(Y,\mathcal{H}^{4}(\Z(3)))$ is torsion by \cite[Proposition 3.3 (i)]{CTV} (it is actually zero as a consequence of the Bloch-Kato conjecture, see \cite[Theorem 3.1]{CTV}), 
so the result follows.

Let $d=\dim Y$.
We prove that $\widetilde{\psi}^{d-1}|_{\tors}$ is an isomorphism.
By Lemma \ref{t1}, it is enough to prove that 
\[\Ker(f^{d-1}) = \Image\left(H^{d-4}(Y, \mathcal{H}^{d}(\Z(d-1)))\rightarrow H^{d-2}(Y, \mathcal{H}^{d-1}(\Z(d-1)))\right)\] 
is torsion.
The group $H^{d-4}(Y,\mathcal{H}^{d}(\Z(d-1)))$ is torsion by \cite[Proposition 3.3 (ii)]{CTV}, so the result follows.
\end{proof}

\begin{cor}\label{cA}
Under the assumptions of Theorem \ref{tA},
the following are equivalent:
\begin{enumerate}
\item[(i)] the Abel-Jacobi map $\psi^{p}$ is universal;
\item[(ii)] the sublattice $N^{p-1}H^{2p-1}(Y,\Z(p))/\tors \subset H^{2p-1}(Y,\Z(p))/\tors$
is primitive;
\item[(iii)] the restriction $\psi^{p}|_{\tors}\colon A^{p}(Y)_{\tors}\rightarrow J^{p}_{a}(Y)_{\tors}$
is an isomorphism.
\end{enumerate}
\end{cor}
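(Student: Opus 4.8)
The plan is to route all three conditions through the natural map $\pi^{p}\colon J(N^{p-1}H^{2p-1}(Y,\Z(p)))\to J^{p}_{a}(Y)$ in the Walker factorization $\psi^{p}=\pi^{p}\circ\widetilde{\psi}^{p}$, exploiting the two outputs of Theorem \ref{tA}: that $\widetilde{\psi}^{p}$ is universal and that $\widetilde{\psi}^{p}|_{\tors}$ is an isomorphism. The central observation is that each of (i), (ii), (iii) should be equivalent to the vanishing of $\Ker(\pi^{p})$, so the corollary reduces to three short comparisons with this single group.

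First I would record the structure of $\pi^{p}$. Since $\psi^{p}$ surjects onto $J^{p}_{a}(Y)$ and $\widetilde{\psi}^{p}$ is surjective, $\pi^{p}$ is surjective. By the kernel identity recalled in the introduction, $\Ker(\pi^{p})$ equals the finite group $\Coker(H^{2p-1}(Y,\Z(p))_{\tors}\to (H^{2p-1}(Y,\Z(p))/N^{p-1}H^{2p-1}(Y,\Z(p)))_{\tors})$. In particular $\pi^{p}$ is an isogeny, and the condition $\Ker(\pi^{p})=0$ is exactly the primitivity statement (ii). Hence $\pi^{p}$ is an isomorphism if and only if (ii) holds.

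Next, for (i) $\Leftrightarrow$ (ii): if (ii) holds then $\pi^{p}$ is an isomorphism, so $\psi^{p}=\pi^{p}\circ\widetilde{\psi}^{p}$ is universal because $\widetilde{\psi}^{p}$ is universal by Theorem \ref{tA}; conversely, if $\psi^{p}$ is universal, then both $\psi^{p}$ and $\widetilde{\psi}^{p}$ are universal regular homomorphisms out of $A^{p}(Y)$, and by uniqueness of the universal object the comparison map between their targets---which is precisely $\pi^{p}$---must be an isomorphism, giving (ii). For (iii) $\Leftrightarrow$ (ii): restricting the factorization to torsion yields $\psi^{p}|_{\tors}=\pi^{p}|_{\tors}\circ\widetilde{\psi}^{p}|_{\tors}$, and $\widetilde{\psi}^{p}|_{\tors}$ is an isomorphism by Theorem \ref{tA}, so $\psi^{p}|_{\tors}$ is an isomorphism if and only if $\pi^{p}|_{\tors}$ is. Because $\pi^{p}$ is an isogeny, $\pi^{p}|_{\tors}$ is surjective with kernel $\Ker(\pi^{p})$, so it is an isomorphism if and only if $\Ker(\pi^{p})=0$, that is, if and only if (ii).

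The hard part will be the uniqueness step in (i) $\Rightarrow$ (ii): one must verify that the morphism forced by comparing the two universal homomorphisms coincides with $\pi^{p}$ rather than some a priori different map, and that the resulting two-sided comparison composes to the identity on each target. This is where the bookkeeping of universality lives; it becomes routine once one notes that any such comparison is determined by its restriction to the image of $A^{p}(Y)$, where the relevant maps agree by construction. The remaining ingredients---finiteness of $\Ker(\pi^{p})$ and the surjectivity and kernel of the induced map on torsion of an isogeny---are standard.
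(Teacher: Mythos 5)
Your proposal is correct and takes essentially the same route as the paper: the paper's own proof treats (i) $\Leftrightarrow$ (ii) as already settled by Walker's factorization together with the universality of $\widetilde{\psi}^{p}$ from Theorem \ref{tA}, and then proves (ii) $\Leftrightarrow$ (iii) via the isomorphism $\Ker(\pi^{p})\cong\Ker\left(\psi^{p}|_{\tors}\right)$ obtained from the torsion isomorphism $\widetilde{\psi}^{p}|_{\tors}$ --- exactly the comparisons you carry out. The only difference is one of exposition: you spell out the uniqueness/surjectivity bookkeeping for (i) $\Rightarrow$ (ii) and the isogeny argument for $\pi^{p}|_{\tors}$, which the paper leaves implicit.
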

\begin{proof}
It is enough to prove that (ii) and (iii) are equivalent.
By Theorem \ref{tA}, we have an isomorphism
\[
\Ker\left(\pi^{p})\cong\Ker(\psi^{p}|_{\tors}\colon A^{p}(Y)_{\tors}\rightarrow J^{p}_{a}(Y)_{\tors}\right).
\]
The result follows.
\end{proof}

\section{Non-algebraic integral Hodge classes of Koll\'ar type and torsion elements in the Abel-Jacobi kernel}
Inspired by the work of Soul\'e and Voisin \cite{SV}, we prove:

\begin{prop}\label{tB}
Let $X$ be a smooth projective variety such that 
the sublattice
\[H^{2p}_{\alg}(X,\Z(p))/\tors \subset Hdg^{2p}(X,\Z)/\tors\]
is not primitive.
Then there exists a smooth elliptic curve $E$ such that
the restriction
\[
\psi^{p+1}|_{\tors}\colon A^{p+1}(X\times E)_{\tors}\rightarrow J^{p+1}_{a}(X\times E)_{\tors}
\]
is not an isomorphism.
\end{prop}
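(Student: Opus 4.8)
The plan is to exhibit a nonzero torsion class in $A^{p+1}(X\times E)$ that is annihilated by $\psi^{p+1}$, so that $\psi^{p+1}|_{\tors}$ fails to be injective; the mechanism is that the non-primitivity of $H^{2p}_{\alg}(X,\Z(p))/\tors$ in $Hdg^{2p}(X,\Z)/\tors$ propagates, after crossing with $E$, to a non-primitivity that is seen by the Walker map but not by the Abel-Jacobi map. To set up, recall that in this degree $N^pH^{2p}(X,\Z(p))=H^{2p}_{\alg}(X,\Z(p))$ by the Bloch-Ogus theory \cite{BO}. The hypothesis then supplies a prime $l$, a class $\alpha\in Hdg^{2p}(X,\Z)$, and a codimension $p$ cycle $Z$ on $X$ with $[Z]=l\alpha$ modulo torsion, where $\alpha\notin H^{2p}_{\alg}(X,\Z(p))$ modulo torsion; equivalently, $[Z]\in N^pH^{2p}(X,\Z(p))$ is divisible by $l$ in $H^{2p}(X,\Z(p))/\tors$ but not in $N^pH^{2p}(X,\Z(p))/\tors$.

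Next I construct the cycle. Let $E$ be any smooth elliptic curve and $e\in E$ a point of order $l$, so that $D=e-0\in\Pic^0(E)$ is $l$-torsion; write $[e]=\eta\otimes(1/l)\in H^1(E,\Z(1))\otimes\Q/\Z$ with $\eta$ primitive. Put $W=Z\times D\in CH^{p+1}(X\times E)$. Since $D$ is algebraically equivalent to zero we have $W\in A^{p+1}(X\times E)$, and since $lD=0$ in $CH^1(E)$ the class $W$ is $l$-torsion. The two invariants to compare are obtained from the algebraic family $e'\mapsto Z\times(e'-0)$, which induces regular homomorphisms $E\to J^{p+1}_a(X\times E)$ and $E\to J(N^pH^{2p+1}(X\times E,\Z(p+1)))$ carrying $e$ to $\psi^{p+1}(W)$ and $\widetilde\psi^{p+1}(W)$; on the underlying lattices both are induced by the exterior product $[Z]\boxtimes(-)\colon H^1(E,\Z(1))\to H^{2p+1}(X\times E,\Z(p+1))$.

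Now I compute. On the lattice $H^{2p+1}(X\times E,\Z(p+1))/\tors$ underlying $J^{p+1}(X\times E)$ one has $[Z]=l\alpha$ (the torsion discrepancy between $[Z]$ and $l\alpha$ dies modulo torsion), so the homomorphism $E\to J^{p+1}_a(X\times E)$ is $l$ times the one induced by $\alpha\boxtimes(-)$; as $e$ is $l$-torsion this gives $\psi^{p+1}(W)=0$. On the lattice $N^pH^{2p+1}(X\times E,\Z(p+1))/\tors$ underlying Walker's Jacobian the factor $l$ cannot be extracted, because $\alpha\boxtimes\eta$ does not lie in $N^p$: indeed $N^pH^{2p+1}(X\times E)\cap\big(H^{2p}(X)\otimes H^1(E)\big)=H^{2p}_{\alg}(X,\Z(p))\otimes H^1(E,\Z(1))$ by multiplicativity of the coniveau filtration ($H^1(E)$ being of pure coniveau $0$), while $\alpha\notin H^{2p}_{\alg}(X,\Z(p))$. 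Hence $[Z]\boxtimes\eta$ is not $l$-divisible in $N^pH^{2p+1}(X\times E,\Z(p+1))/\tors$, so $\widetilde\psi^{p+1}(W)=[Z]\boxtimes[e]\ne0$ in $\big(N^pH^{2p+1}(X\times E,\Z(p+1))/\tors\big)\otimes\Q/\Z$; concretely this can be carried out with the refined Bloch map $\widetilde\lambda^{p+1}_l$ of Lemma \ref{t1}, computing $\widetilde\lambda^{p+1}_l(W)=\pm\,[Z]\boxtimes\lambda^1_l(D)$ modulo $l$. In particular $W\ne0$ while $\psi^{p+1}(W)=0$, so $\psi^{p+1}|_{\tors}$ is not injective, hence not an isomorphism.

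The main obstacle is the third step, and precisely the assertion $\widetilde\psi^{p+1}(W)\ne0$. This rests on two points: identifying the analytic representation of the Walker map on the product family with the exterior-product map into the coniveau mixed Hodge structure $N^pH^{2p+1}$, and the coniveau-Künneth identity above that transports the non-primitivity from $X$ to $X\times E$. The conceptual subtlety is that $W$ is invisible to $\psi^{p+1}$ yet visible to $\widetilde\psi^{p+1}$: the vanishing $\psi^{p+1}(W)=0$ is forced by working modulo torsion in the lattice of $J^{p+1}$, where the integer $l$ is absorbed into the divisibility $[Z]=l\alpha$, whereas on the coniveau-refined lattice the class $\alpha\boxtimes\eta$ genuinely leaves $N^p$, so $l$ survives and records exactly the non-primitivity. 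By comparison, the membership $W\in A^{p+1}(X\times E)_{\tors}$ and the vanishing $\psi^{p+1}(W)=0$ are routine.
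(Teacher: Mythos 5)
Your construction of the torsion cycle and the verification that $\psi^{p+1}(W)=0$ are correct, and they agree in substance with the paper: the paper lifts the obstruction class to $\widetilde{\alpha}\in CH^{p}(X)\otimes \Q/\Z$, forms the same exterior-product cycle, and kills its Abel--Jacobi image by a Deligne-cohomology computation equivalent to your mod-torsion lattice computation. The genuine gap is in the other half of your argument, the non-vanishing $\widetilde{\psi}^{p+1}(W)\neq 0$, which rests entirely on the asserted identity
\[
N^{p}H^{2p+1}(X\times E)\cap\bigl(H^{2p}(X)\otimes H^{1}(E)\bigr)=H^{2p}_{\alg}(X,\Z(p))\otimes H^{1}(E,\Z(1)),
\]
attributed to ``multiplicativity of the coniveau filtration''. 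Multiplicativity gives only the inclusion $\supseteq$. The inclusion you need, $\subseteq$, would say: if $\alpha\boxtimes\eta$ is supported on a closed subset $S\subset X\times E$ of codimension $\geq p$, then $\alpha$ is supported in codimension $\geq p$ in $X$. The standard arguments fall one step short: since $\dim S=\dim X+1-p$, the projection $pr_{X}(S)$ has codimension only $\geq p-1$ in $X$, and either restricting $\alpha\boxtimes\eta$ to $(X\setminus pr_{X}(S))\times E$ and applying K\"unneth, or cupping with a class dual to $\eta$ and pushing forward along $pr_{X}$, shows merely that $\alpha$ vanishes off $pr_{X}(S)$, i.e.\ $\alpha\in N^{p-1}H^{2p}(X,\Z(p))$. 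There is no integral K\"unneth formula for the coniveau filtration that recovers the missing step, and this is precisely the crux: whether $\alpha\boxtimes\eta$ can be supported in codimension $p$ on some non-product subset of $X\times E$ even though $\alpha$ is not supported in codimension $p$ on $X$ is exactly the kind of question one cannot settle by these means.

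This is also exactly the point where the paper proceeds differently and where a hypothesis you have silently dropped enters. The paper proves the cycle is non-zero not cohomologically but in the Chow group, via Schoen's theorem \cite[Theorem 0.2]{Sch} on injectivity of the exterior product map $CH^{p}(X)\otimes CH^{1}(E)_{\tors}\rightarrow CH^{p+1}(X\times E)$; this is an arithmetic specialization result and requires choosing $E$ with $j(E)\notin k$, where $k\subset\C$ is an algebraically closed field of finite transcendence degree over which $X$ and the lifted class are defined. Your claim that \emph{any} elliptic curve works is a symptom of the gap. Note, moreover, that if your K\"unneth identity were available, you would obtain the non-primitivity of $N^{p}H^{2p+1}(X\times E,\Z(p+1))/\tors\subset H^{2p+1}(X\times E,\Z(p+1))/\tors$ with no hypothesis on $CH_{0}(X)$; the paper obtains that statement (Theorem \ref{t}) only by combining Proposition \ref{tB} with Theorem \ref{tA} and Corollary \ref{cA}, whose proofs require $CH_{0}$ of the product to be supported on a $3$-dimensional subset (decomposition of the diagonal). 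The fact that the paper routes all transfer between coniveau lattices and torsion cycles through the Walker map and Chow-theoretic inputs, rather than through any direct Hodge-theoretic product argument, reflects that shortcuts of the kind you propose are not known.
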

\begin{rem}
The assumption of Proposition \ref{tB} for $p=2$ is satisfied by Koll\'ar's example \cite[p.134, Lemma]{BCC} (see also \cite[Section 2]{SV}).
It is a very general hypersurface in $\P^{4}$ of degree $l^{3}$ for a prime number $l\geq 5$.
When it contains a certain smooth degree $l$ curve, the same conclusion follows from \cite[Theorem 4]{SV}.
The details are given in \cite[Section 4]{SV}.
\end{rem}
\begin{proof}[Proof of Proposition \ref{tB}]
We define
\[
\overline{Z}^{2p}(X)=\Coker\left(H^{2p}(X,\Z(p))_{\tors}\rightarrow Hdg^{2p}(X,\Z)/H^{2p}_{\alg}(X,\Z(p)) \right).
\]
Then we have the following exact sequence:
\[
0\rightarrow 
\overline{Z}^{2p}(X)_{\tors}
\rightarrow H^{2p}_{\alg}(X, \Z(p))\otimes \Q/\Z \rightarrow Hdg^{2p}(X, \Z)\otimes \Q/\Z.
\]
We have $\overline{Z}^{2p}(X)_{\tors}\neq 0$ by the assumption.
Let $\alpha \in 
\overline{Z}^{2p}(X)_{\tors}$
be a non-trivial element;
we use the same notation for its image in $H^{2p}_{\alg}(X,\Z(p))\otimes \Q/\Z$.
Let $\widetilde{\alpha}\in CH^{p}(X)\otimes \Q/\Z$ be an element which maps to $\alpha$ via the surjection 
\[cl^{p}\otimes \Q/\Z \colon CH^{p}(X)\otimes \Q/\Z \rightarrow H^{2p}_{\alg}(X, \Z(p))\otimes \Q/\Z.\]
Let $k\subset \C$ be an algebraically closed field such that $\tr.\deg_{\Q}k <\infty$ and both $X$ and $\widetilde{\alpha}$ are defined over $k$.
Let $E$ be a smooth elliptic curve such that $j(E)\not\in k$.
We fix one component $\Q/\Z$ of $CH^{1}(E)_{\tors}=(\Q/\Z)^{2}$,
and we identify $\widetilde{\alpha}$ with an element in $CH^{p}(X)\otimes CH^{1}(E)_{\tors}$.
By the Schoen theorem \cite[Theorem 0.2]{Sch}, the image $\beta$ of $\widetilde{\alpha}$ by the exterior product map
\[
CH^{p}(X)\otimes CH^{1}(E)_{\tors}\rightarrow CH^{p+1}(X\times E)
\]
is non-zero.
Then $\beta \in A^{p+1}(X\times E)_{\tors}$.
We prove 
\[\beta \in \Ker\left(\psi^{p+1}\colon A^{p+1}(X\times E)\rightarrow J^{p+1}_{a}(X\times E)\right).\]
It is enough to prove that $\beta$ is in the kernel of the cycle class map of the Deligne cohomology:
\[
cl_{\mathcal{D}}^{p+1}\colon CH^{p+1}(X\times E)\rightarrow H^{2p+2}_{\mathcal{D}}(X\times E, \Z(p+1)).
\]
The composition of $cl_{\mathcal{D}}^{p+1}$ with the exterior product map factors through
\[
cl_{\mathcal{D}}^{p}\otimes cl_{\mathcal{D}}^{1}\colon CH^{p}(X)\otimes CH^{1}(E)_{\tors}\rightarrow H^{2p}_{\mathcal{D}}(X, \Z(p))\otimes H^{2}_{\mathcal{D}}(E, \Z(1))_{\tors}.
\]
Now it is enough to prove that $\widetilde{\alpha}$ is in the kernel of this map.
Since we have an isomorphism
\[
H^{2p}_{\mathcal{D}}(X, \Z(p))\otimes H^{2}_{\mathcal{D}}(E, \Z(1))_{\tors}\cong Hdg^{2p}(X, \Z)\otimes H^{2}_{\mathcal{D}}(E, \Z(1))_{\tors},
\]
the proof is done by the choice of $\widetilde{\alpha}$.
\end{proof}

\begin{proof}[Proof of Theorem \ref{t}]
For any smooth projective curve $E$, the group $CH_{0}(X\times E)$ is supported on a $3$-dimensional closed subset.
The proof is done by applying Corollary \ref{cA} for $p=3$ to $Y=X\times E$ and Proposition \ref{tB} for $p=2$.
\end{proof}

The same arguments yield a ``homology counterpart'' of Theorem \ref{t}:

\begin{thm}
Let $X$ be a smooth projective variety such that
$CH_{0}(X)$ is supported on a $2$-dimensional closed subset
and 
the sublattice
\[
H_{2,\alg}(X,\Z(1))/\tors \subset Hdg_{2}(X,\Z)/\tors
\]
is not primitive.
Then there exists a smooth elliptic curve $E$ such that 
the sublattice
\[
N_{2}H_{3}(X\times E,\Z(1))/\tors \subset H_{3}(X\times E,\Z(1))/\tors
\]
is not primitive
and the Abel-Jacobi map $\psi_{1}\colon A_{1}(X\times E)\rightarrow J_{1,a}(X\times E)$ is not universal.
\end{thm}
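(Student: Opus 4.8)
The plan is to reduce the homological statement to the cohomological results already established, via Poincar\'e duality, in exact parallel with the proof of Theorem \ref{t}. Write $n=\dim X$ and $d=\dim(X\times E)=n+1$. For a smooth projective variety of dimension $d$, Poincar\'e duality furnishes isomorphisms $H_{j}(-,\Z(k))\cong H^{2d-j}(-,\Z(d-k))$ compatible with the cycle class maps, the Hodge structures, algebraic equivalence, and, crucially, the coniveau/niveau filtrations, carrying $N_{i}$ to $N^{d-i}$. Under these identifications $A_{1}(X\times E)$ corresponds to $A^{n}(X\times E)$, the Abel-Jacobi map $\psi_{1}$ to $\psi^{n}$, the Jacobian $J_{1,a}$ to $J^{n}_{a}$, and the inclusion $N_{2}H_{3}(X\times E,\Z(1))/\tors\subset H_{3}(X\times E,\Z(1))/\tors$ to $N^{n-1}H^{2n-1}(X\times E,\Z(n))/\tors\subset H^{2n-1}(X\times E,\Z(n))/\tors$. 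Likewise on $X$ itself, $H_{2}(X,\Z(1))\cong H^{2n-2}(X,\Z(n-1))$ identifies the non-primitiveness hypothesis $H_{2,\alg}(X,\Z(1))/\tors\subset Hdg_{2}(X,\Z)/\tors$ with the non-primitiveness of $H^{2(n-1)}_{\alg}(X,\Z(n-1))/\tors\subset Hdg^{2(n-1)}(X,\Z)/\tors$, which is exactly the hypothesis of Proposition \ref{tB} for $p=n-1$.

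With these translations in hand, I would run the argument of Theorem \ref{t} with the codimension shifted. First, apply Proposition \ref{tB} with $p=n-1$: it produces a smooth elliptic curve $E$ for which $\psi^{n}|_{\tors}\colon A^{n}(X\times E)_{\tors}\rightarrow J^{n}_{a}(X\times E)_{\tors}$ fails to be an isomorphism; by the duality of the previous paragraph this is precisely the failure of condition (iii) of Corollary \ref{cA} for the value $p=n=\dim(X\times E)-1$. Next, note that $CH_{0}(X\times E)$ is supported on a $3$-dimensional closed subset (as recorded in the proof of Theorem \ref{t}), so Corollary \ref{cA} applies to $Y=X\times E$ with $p=\dim Y-1$. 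Since (iii) fails, the equivalences of Corollary \ref{cA} force (ii) and (i) to fail as well: the sublattice $N^{n-1}H^{2n-1}(X\times E,\Z(n))/\tors\subset H^{2n-1}(X\times E,\Z(n))/\tors$ is not primitive, and $\psi^{n}$ is not universal. Dualizing back, $N_{2}H_{3}(X\times E,\Z(1))/\tors\subset H_{3}(X\times E,\Z(1))/\tors$ is not primitive and $\psi_{1}$ is not universal, which is the assertion.

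I expect the step requiring the most care is the compatibility of Poincar\'e duality with the homological constructions: that the homologically defined niveau filtration $N_{i}$, Abel-Jacobi map, intermediate Jacobian, and the notions of regular homomorphism and universality all correspond under duality to their cohomological counterparts in codimension $d-i$. The filtration identification $N_{i}\leftrightarrow N^{d-i}$ is part of the Bloch-Ogus formalism, while the compatibility with regular homomorphisms and universality follows because these are defined purely through the cycle groups $A_{i}=A^{d-i}$ and families of cycles, on which duality acts as the standard reindexing of correspondences. Alternatively, if one prefers to avoid invoking duality, the same conclusion can be reached by re-deriving the homological analogues of Lemma \ref{t1}, Theorem \ref{tA}, Corollary \ref{cA}, and Proposition \ref{tB} verbatim, replacing cohomology throughout by Borel-Moore homology and using the homological Bloch-Ogus niveau spectral sequence together with the homological Bloch map; this is what is meant by ``the same arguments.''
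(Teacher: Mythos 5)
Your proposal is correct and matches the paper's intent: the paper proves this theorem by ``the same arguments'' as Theorem \ref{t}, i.e.\ by noting $CH_{0}(X\times E)$ is supported on a $3$-dimensional closed subset and then invoking Proposition \ref{tB} at $p=\dim X-1$ together with Corollary \ref{cA} at $p=\dim Y-1$ for $Y=X\times E$, which is exactly your argument once the homological statement is rewritten cohomologically via Poincar\'e duality (with $N_{2}H_{3}$ corresponding to $N^{n-1}H^{2n-1}$ and $\psi_{1}$ to $\psi^{n}$).
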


\section{Stably birational invariants}

\begin{lem}\label{l1}
The groups $\Ker(\psi^{3}|_{\tors})$, $\Ker(\psi^{d-1}|_{\tors})$, $\Ker(\pi^{3})$, and $\Ker(\pi^{d-1})$, where $d=\dim X$,
are stably birational invariants of smooth projective varieties $X$.
\end{lem}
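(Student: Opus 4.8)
The plan is to reduce stable birational invariance to the behaviour of the four groups under two elementary operations and then to compute that behaviour by means of the projective bundle and blow-up decompositions. Recall that $X$ and $X'$ are stably birational precisely when $X\times\P^{m}$ is birational to $X'\times\P^{n}$ for some $m,n$, and that by the Weak Factorization Theorem every birational map between smooth projective varieties is a composite of blow-ups and blow-downs along smooth centres. Hence it suffices to show that each of $\Ker(\psi^{3}|_{\tors})$, $\Ker(\psi^{d-1}|_{\tors})$, $\Ker(\pi^{3})$, $\Ker(\pi^{d-1})$ (with $d=\dim$) is unchanged (i) when $X$ is replaced by $X\times\P^{1}$, and (ii) when $X$ is replaced by the blow-up $\widetilde{X}=\mathrm{Bl}_{Z}X$ along a smooth closed subvariety $Z$ of codimension $c\ge 2$. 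Throughout I would work with the induced decompositions of Chow motives $h(X\times\P^{1})=h(X)\oplus h(X)(-1)$ and $h(\widetilde{X})=h(X)\oplus\bigoplus_{i=1}^{c-1}h(Z)(-i)$, observing that each realization in play --- the Chow group $A^{\bullet}$, the cohomology with its coniveau filtration $N^{\bullet}$, the intermediate Jacobians, and the maps $\psi^{\bullet}$, $\widetilde{\psi}^{\bullet}$, $\pi^{\bullet}$ --- is functorial for these correspondences, a Tate twist $(-i)$ lowering the codimension index by $i$.

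First I would treat the product with $\P^{1}$. The projective bundle formula gives $A^{p}(X\times\P^{1})_{\tors}\cong A^{p}(X)_{\tors}\oplus A^{p-1}(X)_{\tors}$, and Künneth gives $N^{p-1}H^{2p-1}(X\times\P^{1},\Z(p))\cong N^{p-1}H^{2p-1}(X,\Z(p))\oplus N^{p-2}H^{2p-3}(X,\Z(p-1))$, the second summand coming from the codimension one class on $\P^{1}$, which raises coniveau by one. These are compatible with $\psi^{p}$ and $\pi^{p}$, so that
\[
\Ker(\psi^{p}|_{\tors})(X\times\P^{1})\cong \Ker(\psi^{p}|_{\tors})(X)\oplus\Ker(\psi^{p-1}|_{\tors})(X),
\]
and likewise for $\pi^{p}$. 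For $p=3$ the extra summand is $\Ker(\psi^{2}|_{\tors})(X)$ (resp. $\Ker(\pi^{2})(X)$), which vanishes because $\psi^{2}$ is universal and restricts to an isomorphism on torsion (Merkurjev--Suslin, Murre). For the top case one notes that on $X\times\P^{1}$ the relevant index is $d=\dim(X\times\P^{1})-1$, and the formula reads $\Ker(\psi^{d}|_{\tors})(X\times\P^{1})\cong\Ker(\psi^{d}|_{\tors})(X)\oplus\Ker(\psi^{d-1}|_{\tors})(X)$; here $\Ker(\psi^{d}|_{\tors})(X)=0$ since codimension $d$ cycles on the $d$-fold $X$ are $0$-cycles and $\psi^{d}|_{\tors}$ is an isomorphism by Roitman's theorem (equivalently $H^{2d-1}(X)/N^{d-1}H^{2d-1}(X)=0$ by Bloch--Ogus, so $\Ker(\pi^{d})(X)=0$). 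Thus both groups are preserved under $\times\P^{1}$.

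Next I would treat the blow-up. The blow-up formula yields $A^{p}(\widetilde{X})_{\tors}\cong A^{p}(X)_{\tors}\oplus\bigoplus_{i=1}^{c-1}A^{p-i}(Z)_{\tors}$, with the analogous splitting of $N^{p-1}H^{2p-1}$ (the classes from $Z$ being supported on the exceptional divisor), all compatibly with $\psi$ and $\pi$. Hence
\[
\Ker(\psi^{p}|_{\tors})(\widetilde{X})\cong\Ker(\psi^{p}|_{\tors})(X)\oplus\bigoplus_{i=1}^{c-1}\Ker(\psi^{p-i}|_{\tors})(Z),
\]
and similarly for $\pi^{p}$. For $p=3$ the extra summands are $\Ker(\psi^{2}|_{\tors})(Z)$ and $\Ker(\psi^{1}|_{\tors})(Z)$ (for $i=1,2$), which vanish since $\psi^{1}$ and $\psi^{2}$ restrict to isomorphisms on torsion, and for $i\ge 3$ the summand is zero because $A^{\le 0}(Z)=0$. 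Since blowing up preserves dimension, for the top case we keep $p=d-1$ and find extra summands $\Ker(\psi^{d-1-i}|_{\tors})(Z)$ with $\dim Z=d-c$; for $i<c-1$ the codimension $d-1-i$ exceeds $\dim Z$, so the summand vanishes, while for $i=c-1$ it is $\Ker(\psi^{\dim Z}|_{\tors})(Z)=0$ by Roitman. The same bookkeeping, using $\Ker(\pi^{1})=\Ker(\pi^{2})=0$ and the top-degree coniveau vanishing, disposes of the $\pi$ groups, and invariance under blow-downs is the inverse statement.

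I expect the main difficulty to lie in the compatibility assertion of the first paragraph: that the projective bundle and blow-up isomorphisms are simultaneously compatible with the whole package $A^{\bullet}$, $N^{\bullet}H^{\bullet}$, the intermediate Jacobians, $\psi^{\bullet}$, $\widetilde{\psi}^{\bullet}$ and $\pi^{\bullet}$, with the correct index shift of the coniveau filtration under a Tate twist. I would establish this by realizing all these decompositions through the same algebraic correspondences (the graph of $\widetilde{X}\to X$ and the projective bundle $E=\P(N_{Z/X})\to Z$), so that functoriality of the cycle class map into Deligne cohomology, of the Walker factorization, and of the Bloch map of Lemma \ref{t1} forces every map to split along the decomposition; the coniveau statement then follows because the correspondences supported on $Z$ factor through the exceptional divisor. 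Once this compatibility is in hand, the vanishing inputs collected above finish the proof.
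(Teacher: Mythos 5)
Your proof is correct and takes essentially the same approach as the paper: reduce stable birational invariance to invariance under blow-ups along smooth centres and products with projective space, use the standard decomposition formulas (compatibly with $\psi^{\bullet}$, $\widetilde{\psi}^{\bullet}$ and $\pi^{\bullet}$), and kill the extra summands via the vanishing of $\Ker(\psi^{i}|_{\tors})$ and $\Ker(\pi^{i})$ for $i\leq 2$ (Picard variety, Murre) and for $i=\dim$ (Roitman, Albanese). The only cosmetic differences are that you invoke weak factorization explicitly and work with $\P^{1}$ instead of $\P^{n}$, neither of which changes the substance.
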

\begin{rem}
A related result is proved by Voisin \cite[Lemma 2.2]{V3}.
\end{rem}
\begin{proof}[Proof of Lemma \ref{l1}]
For each group, it is enough to check 
\begin{enumerate}
\item[(i)] the invariance under the blow-up along a smooth subvariety;
\item[(ii)] the invariance under taking the product with $\P^{n}$.
\end{enumerate}
By the formulas under these operations for the Chow groups and the Deligne cohomology groups (resp. the coniveau spectral sequence and the integral cohomology groups)
and by their compatibility with the cycle class maps (resp. the differentials and the edge homomorphisms),
(i) and (ii) are reduced to the triviality of the groups $\Ker(\psi^{i}|_{\tors})$ and $\Ker(\pi^{i})$ for $i\leq 2$ and $i=\dim Y$ on a smooth projective variety $Y$.
The triviality of $\Ker(\psi^{2}|_{\tors})$ (resp. $\Ker(\psi^{\dim Y}|_{\tors})$) follows from 
the Roitman theorem for codimension $2$-cycles due to Murre 
\cite[Theorem 10.3]{M2} 
(resp. 
the Roitman theorem 
\cite[Theorem 3.1]{R}).
The triviality of $\Ker(\pi^{2})$ (resp. $\Ker(\pi^{\dim Y})$) follows from the universality of $\psi^{2}$ (resp. $\psi^{\dim Y}$).
The rest is clear.
The proof is done.
\end{proof}

\begin{cor}
Let $X$ be a smooth projective stably rational variety.
Let $p\in\left\{3, \dim X-1\right\}$.
Then $\Ker(\psi^{p}|_{\tors})=\Ker(\pi^{p})=0$.
Therefore the Abel-Jacobi map $\psi^{p}$ is universal and the sublattice 
\[N^{p-1}H^{2p-1}(X,\Z(p))/\tors \subset H^{2p-1}(X,\Z(p))/\tors\]
is primitive.
\end{cor}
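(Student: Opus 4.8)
The plan is to read this off from the stable birational invariance established in Lemma \ref{l1} together with an explicit computation on projective space. Since $X$ is stably rational, it is stably birationally equivalent to $\P^{N}$ for every $N$ (because $\P^{a}\times\P^{b}$ is rational, the stable class of a stably rational variety is that of $\P^{N}$ for all $N$); I would fix some $N\geq\dim X$ so that all indices below lie in the valid range. By Lemma \ref{l1} the four groups $\Ker(\psi^{3}|_{\tors})$, $\Ker(\psi^{d-1}|_{\tors})$, $\Ker(\pi^{3})$, $\Ker(\pi^{d-1})$ (with $d=\dim X$) are stably birational invariants, so each of the groups attached to $X$ is identified with the corresponding group attached to $\P^{N}$. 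Tracking the dimension-relative index through the products with projective space,
\[
\Ker(\psi^{d-1}|_{\tors})_{X}\cong \Ker(\psi^{N-1}|_{\tors})_{\P^{N}}
\qquad\text{and}\qquad
\Ker(\pi^{d-1})_{X}\cong \Ker(\pi^{N-1})_{\P^{N}},
\]
while the fixed-index groups satisfy $\Ker(\psi^{3}|_{\tors})_{X}\cong \Ker(\psi^{3}|_{\tors})_{\P^{N}}$ and $\Ker(\pi^{3})_{X}\cong \Ker(\pi^{3})_{\P^{N}}$.

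Next I would compute these groups directly on $\P^{N}$. Since $CH^{q}(\P^{N})=\Z$ is generated by a linear subspace in each codimension and algebraic equivalence implies homological equivalence, one has $A^{q}(\P^{N})=0$ for all $q\geq 1$; in particular $A^{q}(\P^{N})_{\tors}=0$, so $\Ker(\psi^{q}|_{\tors})_{\P^{N}}=0$. Moreover the odd cohomology $H^{2q-1}(\P^{N},\Z)$ vanishes, hence the intermediate Jacobian $J(N^{q-1}H^{2q-1}(\P^{N},\Z(q)))$ is trivial and $\Ker(\pi^{q})_{\P^{N}}=0$ as well. Combining with the previous paragraph yields $\Ker(\psi^{p}|_{\tors})_{X}=\Ker(\pi^{p})_{X}=0$ for $p\in\{3,\dim X-1\}$, which is the first assertion.

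For the final clause I would invoke Corollary \ref{cA}. Its hypotheses hold for $X$: a stably rational variety has $CH_{0}(X)=\Z$, which is supported on a point and hence on a $3$-dimensional closed subset, and $p\in\{3,\dim X-1\}$ is exactly the allowed range. Thus conditions (i), (ii), (iii) of Corollary \ref{cA} are equivalent; Theorem \ref{tA} identifies $\Ker(\psi^{p}|_{\tors})$ with $\Ker(\pi^{p})$ and makes $\widetilde{\psi}^{p}|_{\tors}$ an isomorphism, so the vanishing just proved is precisely condition (iii). Therefore (i) and (ii) hold, i.e.\ $\psi^{p}$ is universal and $N^{p-1}H^{2p-1}(X,\Z(p))/\tors\subset H^{2p-1}(X,\Z(p))/\tors$ is primitive.

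I expect the only genuinely delicate point to be bookkeeping rather than geometry: one must be sure that the dimension-relative invariant $\Ker(\psi^{\dim-1}|_{\tors})$ is tracked correctly as the dimension grows under products with $\P^{n}$, so that the codimension-$(d-1)$ invariant of $X$ is matched with the codimension-$(N-1)$ invariant of $\P^{N}$ rather than with some fixed codimension. Everything else is supplied verbatim by Lemma \ref{l1} and Corollary \ref{cA} or reduces to the standard vanishing of zero-cycle-type Chow groups modulo algebraic equivalence and of odd integral cohomology on $\P^{N}$.
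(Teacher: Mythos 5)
Your proposal is correct and is essentially the argument the paper intends: the corollary is stated as an immediate consequence of Lemma \ref{l1}, namely stable birational invariance identifies the four groups for $X$ with those of $\P^{N}$, where they vanish since $A^{q}(\P^{N})=0$ and $H^{2q-1}(\P^{N},\Z)=0$, and the final clause follows from Corollary \ref{cA} (noting $CH_{0}(X)=\Z$ for stably rational $X$). Your attention to the dimension-relative index $d-1$ versus the fixed index $3$ is exactly the right bookkeeping, and your fallback through $\Ker(\pi^{p})=0$ (which is primitivity by definition) cleanly handles the fact that condition (iii) of Corollary \ref{cA} asks for an isomorphism rather than mere injectivity.
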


For a smooth projective variety $X$, let
$Z^{2p}(X)=Hdg^{2p}(X,\Z)/H^{2p}_{\alg}(X, \Z(p))$ be the defect of the integral Hodge conjecture in degree $2p$.
We define
\[\overline{Z}^{2p}(X)=\Coker\left(H^{2p}(X, \Z(p))_{\tors} \rightarrow Z^{2p}(X)\right).\]
Then $\overline{Z}^{2p}(X)_{\tors}=0$ if and only if the sublattice
\[
H^{2p}_{\alg}(X,\Z(p))/\tors\subset Hdg^{2p}(X,\Z)/\tors 
\]
is primitive.

\begin{lem}\label{l2}
The groups $\overline{Z}^{4}(X)$ and $\overline{Z}^{2d-2}(X)$, where $d=\dim X$, are stably birational invariants of smooth projective varieties $X$.
\end{lem}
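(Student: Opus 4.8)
The plan is to follow the strategy of the proof of Lemma \ref{l1}. Since a stable birational equivalence between smooth projective varieties is realized by a sequence of blow-ups and blow-downs along smooth centers together with products with projective spaces, for each of $p=2$ and $p=\dim X-1$ it is enough to check (i) the invariance of $\overline{Z}^{2p}$ under the blow-up along a smooth subvariety, and (ii) the invariance of $\overline{Z}^{2p}$ under taking the product with $\P^{n}$.

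First I would show that both operations split $\overline{Z}^{2p}$ as a direct sum. For the blow-up $\pi\colon \widetilde{X}\rightarrow X$ along a smooth center $Z$ of codimension $c$, the blow-up formula gives an isomorphism
\[
H^{2p}(\widetilde{X},\Z(p))\cong H^{2p}(X,\Z(p))\oplus \bigoplus_{i=1}^{c-1}H^{2p-2i}(Z,\Z(p-i)),
\]
together with the analogous formula for the Chow groups, and both are induced by algebraic correspondences ($\pi^{*}$ and the maps factoring through the exceptional divisor). As these correspondences are morphisms of integral Hodge structures and are compatible with the cycle class maps, the decomposition is simultaneously compatible with the subgroup $Hdg^{2p}$, the subgroup $H^{2p}_{\alg}$, and the torsion subgroup. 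Hence it descends to
\[
\overline{Z}^{2p}(\widetilde{X})\cong \overline{Z}^{2p}(X)\oplus \bigoplus_{i=1}^{c-1}\overline{Z}^{2p-2i}(Z),
\]
and the projective bundle formula yields likewise $\overline{Z}^{2p}(X\times\P^{n})\cong \bigoplus_{j=0}^{n}\overline{Z}^{2p-2j}(X)$.

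Then I would reduce the invariance to the vanishing of the extra summands by a degree count. For $p=2$, the only extra terms are $\overline{Z}^{2}$ and $\overline{Z}^{0}$ (of $Z$ for the blow-up, of $X$ for the product), and these vanish: $\overline{Z}^{2}$ vanishes because the integral Hodge conjecture holds in degree $2$ by the Lefschetz $(1,1)$ theorem, and $\overline{Z}^{0}$ vanishes trivially. For $p=\dim X-1$, the vanishing $H^{k}(Z)=0$ for $k>2\dim Z$ (resp. $H^{k}(X)=0$ for $k>2\dim X$) forces the unique surviving extra term of the blow-up (resp. of the product) to be the top-degree defect $\overline{Z}^{2\dim Z}(Z)$ (resp. $\overline{Z}^{2\dim X}(X)$), which vanishes because the top integral cohomology of a connected smooth projective variety is generated by the algebraic class of a point.

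The main obstacle is the bookkeeping in the first step: one must verify that the cohomological and the Chow-theoretic blow-up and projective bundle formulas are \emph{simultaneously} compatible with $Hdg^{2p}$, with $H^{2p}_{\alg}$, and with the torsion subgroup, so that they genuinely descend to a direct sum decomposition of $\overline{Z}^{2p}$. This amounts precisely to the compatibility between these formulas and the cycle class maps already invoked in the proof of Lemma \ref{l1}, together with the observation that the components of an algebraic (resp. Hodge, resp. torsion) class are again algebraic (resp. Hodge, resp. torsion). Once this is in place, the degree count and the two elementary vanishing statements complete the argument.
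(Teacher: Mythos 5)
Your proof is correct and follows essentially the same route as the paper: reduce, via the blow-up and projective bundle formulas and their compatibility with cycle class maps, to the vanishing of $\overline{Z}^{2}$ (Lefschetz $(1,1)$) and of the top-degree defect $\overline{Z}^{2\dim Y}$ (clear, since the point class generates). Your write-up simply makes explicit the direct-sum bookkeeping and degree count that the paper leaves implicit.
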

\begin{rem}
The groups $Z^{4}(X)$ and $Z^{2d-2}(X)$, where $d=\dim X$, are stably birational invariants of smooth projective varieties $X$ \cite{V1}\cite{V4}
and related to the unramified cohomology groups \cite{CTV}.
\end{rem}
\begin{proof}[Proof of Lemma \ref{l2}]
The proof is reduced to the triviality of the groups $\overline{Z}^{2}(Y)$ and $\overline{Z}^{2\dim Y}(Y)$ on a smooth projective variety $Y$.
The triviality of $\overline{Z}^{2}(Y)$ follows from the Lefschetz $(1,1)$-theorem. 
The triviality of $\overline{Z}^{2\dim Y}(Y)$ is clear.
The proof is done.
\end{proof}

We recall the following question (see \cite[Subsection 5.6]{CTV}):

\begin{q}
Let $X$ be a smooth projective rationally connected variety.
Then is the group $\overline{Z}^{4}(X)$ trivial?
Equivalently, is the sublattice $H^{4}_{\alg}(X,\Z(2))/\tors \subset Hdg^{4}(X,\Z)/\tors$ primitive?
\end{q}

The negative answer to this question would provide us with another example to which we can apply Theorem \ref{t}.
We note that there is a unirational $6$-fold $X$ constructed by Colliot-Th\'el\`ene and Ojanguren \cite{CO} with $Z^{4}(X)$ later proved to be non-zero \cite[Theorem 1.3]{CTV}.
The $6$-fold $X$ is a smooth model of a quadric bundle $Y$ over $\P^{3}$.
It is hard to analyze $\overline{Z}^{4}(X)$ while the construction of $Y$ is explicit.

\begin{appendix}

\section{The Roitman theorem for the Walker maps and decomposition of the diagonal}
A smooth projective variety with $CH_{0}(X)$ supported on a proper closed subset admits a decomposition of the diagonal due to Bloch \cite{B2} and Bloch-Srinivas \cite{BS}.
This result is generalized by Paranjape \cite{P} and Laterveer \cite{La}.
We follow Laterveer's formulation here.
Let $X$ be a smooth projective variety of dimension $d$.
For non-negative integers $r$ and $s$, we consider the following condition:
$CH_{i}(X)_{\Q}$ is supported on an $(i+r)$-dimensional closed subset for any $0\leq i\leq s$.
We call this condition $L_{r,s}$.
Assume that $L_{r,s}$ holds for $X$.
Then $X$ admits a generalized decomposition of the diagonal \cite[Theorem 1.7]{La} (see also \cite[Proposition 6.1]{P}):
there exist closed subsets $V_{0}, \cdots, V_{s}$ and $W_{0}, \cdots, W_{s+1}$ of $X$ with
$\dim V_{j}\leq j+r$ $(j=0,\cdots, s)$ and $\dim W_{j}\leq d-j$ $(j=0, \cdots, s+1)$
such that we have a decomposition
\[
\Delta_{X}= \Delta_{0}+ \cdots + \Delta_{s} + \Delta^{s+1}
\]
in $CH^{d}(X\times X)_{\Q}$,
where $\Delta_{j}$ is supported on $V_{j}\times W_{j}$ $(j=0,\cdots, s)$ and $\Delta^{s+1}$ is supported on $X\times W_{s+1}$.

For a smooth projective variety $Y$,
let $E^{p,q}_{2}(Y)= H^{p}(Y, \mathcal{H}^{q}(\Z))$.
For the action of correspondences on the coniveau spectral sequence,
we refer the reader to \cite[Appendice A]{CTV}.

\begin{lem}\label{l}
Let $X$ be a smooth projective variety of dimension $d$ such that $L_{r,s}$ holds for $X$.
Then $E^{p,q}_{2}(X)$ is torsion if $p+r <q$ and $p<s+1$, or if $p+r <q$ and $q>d-s-1$.
\end{lem}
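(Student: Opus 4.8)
The plan is to play the generalized decomposition of the diagonal against the functorial action of correspondences on the integral coniveau spectral sequence, exploiting that $\Delta_X$ acts as the identity on $E^{p,q}_2(X)$. First I would clear denominators in $CH^d(X\times X)_{\Q}$: fix a positive integer $N$ with $N\Delta_X = \sum_{j=0}^s N\Delta_j + N\Delta^{s+1}$ as \emph{integral} cycles, each still supported on the stated product. Since integral correspondences act integrally on the coniveau spectral sequence (the formalism recalled in \cite[Appendice A]{CTV}), this gives $N\cdot\mathrm{id} = \sum_j (N\Delta_j)_* + (N\Delta^{s+1})_*$ on $E^{p,q}_2(X)$. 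The goal is then to show that, in each of the two stated ranges, every summand on the right vanishes, so that $N\cdot E^{p,q}_2(X)=0$ and the group is torsion.

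The technical input I would extract from the correspondence formalism is the effect on $E^{p,q}_2(X)$ of a correspondence $\Gamma$ supported on a product $A\times B$. On one side, $\Gamma_*$ factors through the bidegree-preserving restriction $\tau_A^*\colon E^{p,q}_2(X)\to E^{p,q}_2(\widetilde A)$ to a resolution $\widetilde A$ of $A$, and by the Bloch-Ogus bounds this target vanishes as soon as $q>\dim A$. On the other side, $\Gamma_*$ factors through a Gysin pushforward from a resolution $\widetilde B$ of $B$, which raises the coniveau level by $\codim_X B = d-\dim B$; hence its $E^{p,q}_2(X)$-component vanishes once $p<d-\dim B$. This yields two clean vanishing criteria that I will feed into the decomposition: $\Gamma_*=0$ on $E^{p,q}_2(X)$ whenever $q>\dim A$, and whenever $p<\codim_X B$.

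With these criteria the bookkeeping is routine. Assuming only $p+r<q$, every $\Delta_j$ with $0\le j\le s$ dies: for $j\le p$ one has $\dim V_j\le j+r\le p+r<q$, so the first criterion applies with $A=V_j$; for $j>p$ one has $\codim_X W_j\ge j>p$, so the second applies with $B=W_j$. Thus the sum collapses to the single remaining term $(N\Delta^{s+1})_*$. Under hypothesis (A), the inequality $p<s+1\le\codim_X W_{s+1}$ kills it directly via the second criterion. Under hypothesis (B) I would instead transpose the entire decomposition, which is legitimate since ${}^t\Delta_X=\Delta_X$: now ${}^t\Delta^{s+1}$ is supported on $W_{s+1}\times X$ and is killed by $q>d-s-1\ge\dim W_{s+1}$ via the first criterion, while each ${}^t\Delta_j$, supported on $W_j\times V_j$, is killed by $q>\dim W_j$ when $j>d-q$ and by $p<\codim_X V_j$ when $j<d-p-r$; because $p+r<q$ forces $d-p-r>d-q$, these two ranges cover every index $j$, so no piece survives.

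I expect the genuine obstacle to be only the first technical input: pinning down, with integral coefficients, that restriction to $\widetilde A$ preserves the bidegree while the Gysin pushforward from $\widetilde B$ raises the coniveau by $\codim_X B$, together with the compatibility of the correspondence action with resolving the possibly singular supports $V_j,W_j$. Once this is secured by citing \cite[Appendice A]{CTV} and the Bloch-Ogus vanishing $E^{p,q}_2=0$ for $p>q$ and for $(p,q)\notin[0,d]\times[0,d]$, the rest is the elementary index argument above; and it is precisely the integrality of the action that upgrades the rational decomposition of the diagonal to the conclusion that $E^{p,q}_2(X)$ is $N$-torsion.
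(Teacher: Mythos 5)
Your proposal is correct and follows essentially the same route as the paper: clear denominators in the generalized decomposition of the diagonal, factor the action of each piece through resolutions of its supports, and kill each piece via the Bloch--Ogus bounds $E^{a,b}_{2}(Y)=0$ for $(a,b)\notin[0,\dim Y]^{2}$ --- your transposition trick in case (B) is literally the paper's use of the pullback actions $(N'\Delta_{j})^{*}$ and $(N'\Delta^{s+1})^{*}$, since $({}^{t}\Gamma)_{*}=\Gamma^{*}$. The only point the paper treats more explicitly than you do is the lifting of each piece $\Delta_{j}$ to a cycle $\widetilde{\Delta}_{j}$ on $\widetilde{V}_{j}\times\widetilde{W}_{j}$, which may only push forward to a multiple $c_{j}\Delta_{j}$ (so the final torsion bound is $N'=Nc_{0}$ rather than $N$); you correctly flag this as the remaining technical input.
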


\begin{tikzpicture}[domain=0:4]
\draw[thick, ->] (0,0)--(4.2,0) node[right] {$p$} ; 
\draw[thick, ->] (0,0)--(0,2.7) node[above] {$q$};
\path[pattern=north east lines] (0,0.5) -- (0,2.5) to (1.5, 2.5) -- (1.5,1.25) -- (0,0.5);
\draw (1.5,0) node[above right] {$s+1$};
\draw plot(\x, {0.5*\x}) node[right] {$q=p$};
\draw[dashed] plot(\x, {0.5*\x+0.5}) node[right] {$q=p+r$};
\draw[dashed] (1.5,0)--(1.5,2.5);
\end{tikzpicture}
\begin{tikzpicture}[domain=0:4]
\draw[thick, ->] (0,0)--(4.2,0) node[right] {$p$}; 
\draw[thick, ->] (0,0)--(0,2.7) node[above] {$q$}; 
\path[pattern=north east lines] (0,1.75) -- (0,2.5) to (4, 2.5) -- (2.5,1.75) -- (0,1.75);
\draw (0,1.75) node[left] {$d-s-1$};
\draw plot(\x, {0.5*\x}) node[right] {$q=p$};
\draw[dashed] plot(\x, {0.5*\x+0.5}) node[right] {$q=p+r$};
\draw[dashed] (0,1.75)--(4, 1.75);
\end{tikzpicture}

\begin{rem}
The case $s=0$ is \cite[Proposition 3.3 (i)(ii)]{CTV}.
\end{rem}
\begin{proof}[Proof of Lemma \ref{l}]
We may assume that the inequalities about the dimensions of $V_{j}, W_{j}$ are equal.
Let $N$ be a positive integer such that
\[
N\Delta_{X} = N\Delta_{0} + \cdots + N\Delta_{s} + N \Delta^{s+1} \in CH^{d}(X\times X). 
\]
Let $\widetilde{V}_{j} (j=0,\cdots, s)$ and $\widetilde{W}_{j}(j=0,\cdots, s+1)$ be resolutions of $V_{j}$ and $W_{j}$,
and $\widetilde{\Delta}_{j}$ be $d$-cycles on $\widetilde{V}_{j}\times \widetilde{W}_{j}$ pushed forward to $c_{j} \Delta_{j}$ for some positive integer $c_{j}$.
We may assume that $c_{0} = \cdots = c_{s+1}$.
Let $N' = N\cdot c_{0}$.
We prove $N'\cdot E^{p,q}_{2}(X) =0$ if $p+r <q$ and $p<s+1$, or if $p+r <q$ and $q>d-s-1$.

For $0\leq j \leq s$, we prove that
\begin{enumerate}
\item[(i)] $(N'\Delta_{j})_{*} =0$ if $(p, q) \not\in [j, j+r]\times [j, j+r]$;
\item[(ii)] $(N'\Delta_{j})^{*} =0$ if $(p, q) \not\in [d-j-r, d-j]\times [d-j-r, d-j]$.
\end{enumerate}
We have a commutative diagram
\[
\xymatrix{
E^{p,q}_{2}(X)\ar[d] \ar[r]^-{(N'\Delta_{j})_{*}}& E^{p,q}_{2}(X) \\
E^{p,q}_{2}(\widetilde{V}_{j}) \ar[d] & E^{p -j, q -j}_{2}(\widetilde{W}_{j}) \ar[u]\\
E^{p,q}_{2}(\widetilde{V}_{j}\times\widetilde{W}_{j}) \ar[r]^-{\cup(N\widetilde{\Delta}_{j})} & E^{p+r,q+r}_{2}(\widetilde{V}_{j}\times \widetilde{W}_{j})\ar[u] \\
}.
\]
To prove (i), it is enough to observe that $E^{p,q}_{2}(\widetilde{V}_{j})=0$ if $p>j+r$ or $q>j+r$, and
$E^{p-j,q-j}_{2}(\widetilde{W}_{j}) =0$ if $p<j$ or $q <j$.
Similarly, we have a commutative diagram
\[
\xymatrix{
E^{p,q}_{2}(X)\ar[d] \ar[r]^-{(N'\Delta_{j})^{*}}& E^{p,q}_{2}(X) \\
E^{p,q}_{2}(\widetilde{W}_{j}) \ar[d] & E^{p+r-d+j, q+r-d+j}_{2}(\widetilde{V}_{j}) \ar[u]\\
E^{p,q}_{2}(\widetilde{V}_{j}\times \widetilde{W}_{j}) \ar[r]^-{\cup (N\widetilde{\Delta}_{j})} & E^{p+r,q+r}_{2}(\widetilde{V}_{j}\times \widetilde{W}_{j})\ar[u] \\
}.
\]
To prove (ii), it is enough to observe that $E^{p,q}_{2}(\widetilde{W}_{j})=0$ if  $p>d-j$ or $q>d-j$,
and $E^{p+r-d+j, q+r-d+j}_{2}(\widetilde{V}_{j}) =0$ if $p<d-r-j$ or $q< d-r-j$.

For $\Delta^{s+1}$, we prove that
\begin{enumerate}
\item[(iii)] $(N' \Delta^{s+1})_{*} =0$ if $(p,q) \not\in [s+1, d]\times [s+1, d]$;
\item[(iv)] $(N'\Delta^{s+1})^{*} =0$ if $(p,q) \not\in [0, d-s-1]\times[0, d-s-1]$.
\end{enumerate}
We have a commutative diagram
\[
\xymatrix{
E^{p,q}_{2}(X) \ar[dd] \ar[r]^-{(N'\Delta^{s+1})_{*}} & E^{p, q}_{2}(X)\\
& E^{p -s-1, q -s-1}_{2}(\widetilde{W}_{s+1}) \ar[u]\\
E^{p,q}_{2}(X\times\widetilde{W}_{s+1}) \ar[r]^-{\cup(N\widetilde{\Delta}^{s+1})} & E^{p+d-s-1, q+d-s-1}_{2}(X\times \widetilde{W}_{s+1})\ar[u] \\
}.
\]
To prove (iii), it is enough to observe that $E^{p-s-1, q-s-1}_{2}(\widetilde{W}_{s+1}) = 0$ if $p<s+1$ or $q<s+1$.
Similarly, we have a commutative diagram
\[
\xymatrix{
E^{p,q}_{2}(X) \ar[d] \ar[r]^-{(N'\Delta^{s+1})^{*}} & E^{p, q}_{2}(X)\\
E^{p,q}_{2} (\widetilde{W}_{s+1})\ar[d] & \\
E^{p,q}_{2}(X\times \widetilde{W}_{s+1}) \ar[r]^-{\cup (N\widetilde{\Delta}^{s+1})} & E^{p+d-s-1, q+d-s-1}_{2}(X\times \widetilde{W}_{s+1})\ar[uu] \\
}.
\]
To prove (iv), it is enough to observe that
$E^{p,q}(\widetilde{W}_{s+1}) = 0$ if $p>d-s-1$ or $q>d-s-1$.

The proof is done by (i), (ii), (iii) and (iv).
\end{proof}

\begin{thm}\label{tAA}
Let $X$ be a smooth projective variety of dimension $d$ such that $L_{3,s}$ holds for $X$.
Let $p\in [3, s+3]\cup [d-s-1, d-1]$.
Then the restriction 
\[\widetilde{\psi}^{p}|_{\tors}\colon A^{p}(X)_{\tors}\rightarrow J(N^{p-1}H^{2p-1}(X,\Z(p)))_{\tors}\] 
is an isomorphism. 
Moreover the Walker map $\widetilde{\psi}^{p}$ is universal.
\end{thm}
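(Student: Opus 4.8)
The plan is to follow the proof of Theorem \ref{tA} verbatim in structure, replacing its single incoming differential $d_2$ by the full collection of incoming differentials into the diagonal term $E_2^{p-1,p}$ and controlling all of them at once through Lemma \ref{l}. As in Theorem \ref{tA}, the second assertion (universality of $\widetilde{\psi}^{p}$) follows from the first via Lemma \ref{p1}, since the Walker map is a surjective regular homomorphism; so it suffices to prove that $\widetilde{\psi}^{p}|_{\tors}$ is an isomorphism. Surjectivity onto the torsion of the target is automatic for a surjective regular homomorphism, exactly as in the proof of Lemma \ref{p1} (one produces an abelian variety $C$ and a correspondence so that $C\to A^{p}(X)\to J(N^{p-1}H^{2p-1}(X,\Z(p)))$ is an isogeny with $C_{\tors}$ surjecting onto the target torsion). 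By Lemma \ref{t1} the remaining injectivity reduces, for every prime $l$, to the vanishing of $\Ker(f^{p}\otimes\Q_{l}/\Z_{l})$, and for this it is enough to show that $\Ker(f^{p})$ is torsion: a torsion group tensored with the divisible group $\Q_{l}/\Z_{l}$ vanishes, so $\Ker(f^{p})$ torsion forces $\Ker(f^{p})\otimes\Q_{l}/\Z_{l}=0$, whence $\Ker(f^{p}\otimes\Q_{l}/\Z_{l})=\Image(\Ker(f^{p})\otimes\Q_{l}/\Z_{l}\to E_{2}^{p-1,p}\otimes\Q_{l}/\Z_{l})=0$ and thus $\Ker(\widetilde{\psi}^{p}|_{l\mathchar`-\tors})=0$ by the exact sequence of Lemma \ref{t1}.

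Next I would identify $\Ker(f^{p})$ in terms of the spectral sequence. The edge homomorphism $f^{p}$ factors as $E_{2}^{p-1,p}\twoheadrightarrow E_{\infty}^{p-1,p}=N^{p-1}H^{2p-1}(X,\Z(p))\hookrightarrow H^{2p-1}(X,\Z(p))$. Because $E_{2}^{a,b}=0$ for $a>b$, the term $E_{2}^{p-1,p}$ carries no nonzero outgoing differential (the target $E_{r}^{p-1+r,p-r+1}$ of $d_{r}$ lies strictly below the diagonal for $r\ge 2$), so $E_{\infty}^{p-1,p}$ is the quotient of $E_{2}^{p-1,p}$ by the images of the incoming differentials $d_{r}\colon E_{r}^{p-1-r,\,p+r-1}\to E_{r}^{p-1,p}$ for $r\ge 2$. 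Hence $\Ker(f^{p})$ is the sum of these images, and each is a subquotient of $E_{2}^{p-1-r,\,p+r-1}$. It therefore suffices to prove that $E_{2}^{p-1-r,\,p+r-1}(X)$ is torsion for every $r\ge 2$ with $p-1-r\ge 0$. Since Tate twists do not affect torsion, I may compute with $\mathcal{H}^{\bullet}(\Z)$ and apply Lemma \ref{l} directly.

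It remains to run the index bookkeeping against Lemma \ref{l} under the hypothesis $L_{3,s}$. Writing $a=p-1-r$ and $b=p+r-1$, one has $b-a=2r$, so $a+3<b$ holds for every $r\ge 2$. For $p\in[3,s+3]$ the largest value of $a$ occurs at $r=2$, where $a=p-3\le s<s+1$; hence $a<s+1$ for all $r\ge 2$, and the first clause of Lemma \ref{l} shows that every $E_{2}^{a,b}(X)$ is torsion. For $p\in[d-s-1,d-1]$ the smallest value of $b$ occurs at $r=2$, where $b=p+1\ge d-s>d-s-1$; hence $b>d-s-1$ for all $r\ge 2$, and the second clause of Lemma \ref{l} applies. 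In either range $\Ker(f^{p})$ is torsion, which by the first paragraph completes the proof.

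The step I expect to be the main obstacle is the one carried out in the second paragraph. In Theorem \ref{tA} only the single differential $d_{2}$ contributed, because the sources of all higher differentials vanished for dimension reasons when $s=0$; for general $s$ infinitely many $d_{r}$ may have nonzero source, so the crux is to see that all of them are controlled simultaneously. This is precisely what Lemma \ref{l} delivers, and the delicate point is matching the two index ranges of $p$ with the two clauses of the lemma, with $r=2$ as the binding case in each.
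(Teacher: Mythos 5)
Your proposal is correct and takes essentially the same route as the paper's proof: reduce via Lemma \ref{p1} and Lemma \ref{t1} to showing $\Ker(f^{p})$ is torsion, then apply Lemma \ref{l} to the groups $E_{2}^{p-1-r,\,p+r-1}(X)$ for $r\geq 2$, i.e.\ $E_{2}^{p-3,p+1}(X),\dots,E_{2}^{0,2p-2}(X)$, which are exactly the sources of all incoming differentials into $E_{2}^{p-1,p}$. The paper states this in one line and leaves the spectral-sequence bookkeeping and the matching of the two ranges of $p$ with the two clauses of Lemma \ref{l} implicit; your write-up simply makes those verifications explicit, with $r=2$ correctly identified as the binding case.
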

\begin{rem}
The case $s=0$ is Theorem \ref{tA}.
\end{rem}
\begin{proof}[Proof of Theorem \ref{tAA}]
The second statement follows from the first one by Lemma \ref{p1}.

We prove that the restriction $\widetilde{\psi}^{p}|_{\tors}$ is an isomorphism.
By Lemma \ref{t1}, it is enough to prove that $\Ker(f^{p})$ is torsion.
By Lemma \ref{l}, the groups 
\[E^{p-3, p+1}_{2}(X),\cdots, E^{0,2p-2}_{2}(X)\]
 are torsion, so the result follows.
\end{proof}

\end{appendix}

\end{document}